\begin{document}
\numberwithin{equation}{section}

\newtheorem{lem}[equation]{Lemma}
\newtheorem{cor}[equation]{Corollary}
\newtheorem{thm}[equation]{Theorem}
\newtheorem{rem}[equation]{Remark}
\newtheorem*{thm*}{Theorem}
\def\G2{{G_2}}
\def\ol{\overline}
\def\op{\oplus}
\def\ot{\otimes}
\def\del{\partial}
\def\CK{Cartan-K\"ahler }
\def\inv{{}^{-1}}
\def\bC{\mathbb{C}}
\def\bO{\mathbb{O}}
\def\bP{\mathbb{P}}
\def\bR{\mathbb{R}}
\def\cF{\mathcal{F}}
\def\cI{\mathcal{I}}
\def\cL{\mathcal{L}}
\def\cP{\mathcal{P}}
\def\cV{\mathcal{V}}
\def\fg{\mathfrak{g}}
\def\fgl{\mathfrak{gl}}
\def\fh{\mathfrak{h}}
\def\fso{\mathfrak{so}}
\def\fspin{\mathfrak{spin}}
\def\sI{\mathscr{I}}
\def\tad{\mathrm{ad}}
\def\td{\mathrm{d}}
\def\dx{\mathrm{d}x}
\def\dy{\mathrm{d}y}
\def\dV{\mathrm{d}V}
\def\dy{\mathrm{d}y}
\def\tdim{\mathit{dim}}
\def\tcodim{\mathit{codim}}
\def\tGL{\mathrm{GL}}
\def\tGr{\mathrm{Gr}}
\def\tHol{\mathrm{Hol}}
\def\bi{\mathbf{i}}
\def\tker{\mathit{ker}}
\def\tId{\mathrm{Id}}
\def\tIm{\mathrm{Im}}
\def\tmod{ \ \mathrm{mod} \ }
\def\tSL{\mathrm{SL}}
\def\tSO{\mathrm{SO}}
\def\tSU{\mathrm{SU}}
\def\tspan{\mathrm{span}}
\def\tSpin{\mathit{Spin}}
\def\tvol{\mathrm{d}\mathit{vol}}
\def\a{\alpha}
\def\w{\omega}
\def\half{\textstyle{\frac12}}


\title[Associative and Cayley Embeddings]
{Calibrated associative and Cayley embeddings}
\author[Colleen Robles and Sema Salur]{Colleen Robles and Sema Salur}
\address{Department of Mathematics, Texas A$\&$M University, College Station, TX, }
\address {Department of Mathematics, University of Rochester, Rochester, NY, 14627}
\email{salur@math.rochester.edu } \subjclass{53C25, 58A15}
\date{\today}

\begin{abstract}
Using the Cartan-K\"ahler theory, and results on real algebraic
structures, we prove two embedding theorems.  First, the interior
of a smooth, compact 3-manifold may be isometrically embedded into
a $G_2$-manifold as an associative submanifold.  Second, the
interior of a smooth, compact 4-manifold $K$, whose double
$\mathit{doub}(K)$ has a trivial bundle of self-dual 2-forms, may
be isometrically embedded into a $\tSpin(7)$-manifold as a Cayley
submanifold. Along the way, we also show that Bochner's Theorem on
real analytic approximation of smooth differential forms, can be
obtained using real algebraic tools developed by Akbulut and King.
\end{abstract}
\maketitle


\section{Introduction}\label{sec:intro}

Let $(M^7,g)$ be a Riemannian 7-manifold whose holonomy group
$\tHol(g)$ is a subgroup of the exceptional group $G_2$.
Then $M$ is naturally equipped with a covariantly constant 3-form $\varphi$
and 4-form $*\varphi$. We call $(M,\varphi,g)$ a {\it $G_2$-manifold}.
It is well known that $\varphi$ and $*\varphi$ are {\it
calibrations} on $M$, in the sense of Harvey and Lawson \cite{HL}.
The corresponding calibrated submanifolds in $M$ are called {\it
associative $3$-folds} and {\it coassociative $4$-folds},
respectively.

\vspace{.1in}

Similarly, if $(M^8,g)$ has $\tHol(g) \subseteq \tSpin(7)$, then
$M$ admits a covariantly constant, self-dual 4-form $\Psi$, and we
call $(M,\Psi,g)$ a {\it $\tSpin(7)$-manifold}.  The 4-form $\Psi$
is the {\it Cayley calibration}, and the calibrated submanifolds
are {\it Cayley 4-folds}.

\vspace{.1in}

Constructing examples of manifolds with $G_2$ and $Spin(7)$
holonomy and their calibrated submanifolds is of interest because
of their importance in string theory. Also, they provide new
examples of volume minimizing submanifolds in a given homology
class \cite{HL}. In \cite{BrClbEm}, R. Bryant applied the
Cartan-K\"{a}hler theory to show that: (1) every closed, real
analytic, oriented Riemannian 3-fold can be isometrically embedded
in a Calabi-Yau 3-fold as a special Lagrangian submanifold; and
(2) every closed, real analytic, oriented Riemannian 4-fold with a
trivial bundle of self-dual 2-forms can be isometrically embedded
in a $G_2$-manifold as an coassociative submanifold.  Moreover,
the submanifolds above may be embedded as the fixed locus of a
real structure (in the special Lagrangian case), or an anti
$G_2$-involution (in the coassociative case).

\vspace{.1in}

In this paper, we will first show that Bryant's constructions can
be repeated for the associative and Cayley submanifolds.

\begin{thm}\label{thm:assoc}
Assume $(K^3,g)$ is a closed, oriented, real analytic Riemannian
3-manifold. Then there exists a $\G2$-manifold $(N^7,\varphi)$ and
an isometric embedding $i : K \hookrightarrow N$ such that the
image $i(K)$ is an associative submanifold of $N$. Moreover,
$(N,\varphi)$ can be chosen so that $i(K)$ is the fixed point set
of a nontrivial $G_2$-involution $r : N \to N$.
\end{thm}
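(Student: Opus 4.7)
My plan is to build $(N,\varphi)$ as a tubular neighborhood of the zero section in the trivial rank-$4$ bundle $N_0 := K\times\mathbb{H}$, with the $G_2$-involution taken to be fibrewise negation $r(x,y)=(x,-y)$, whose fixed locus is precisely $K\times\{0\}\cong K$. To prescribe initial data along $K$, I exploit that every closed oriented $3$-manifold is parallelizable, choosing a real analytic orthonormal framing of $TK$ to identify $TK\cong K\times\tIm(\mathbb{H})$. Using the octonion splitting $\bO=\mathbb{H}\oplus\mathbb{H}e$, this realizes $TN_0|_{K\times\{0\}}=TK\oplus\mathbb{H}$ as $K\times\tIm(\bO)$; transporting the standard constant $G_2$-three-form on $\tIm(\bO)$ yields a real analytic $3$-form $\varphi_0$ along $K$ with $\varphi_0|_{TK}=\tvol_K$, so $K$ is already calibrated pointwise.

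The next step is to extend $\varphi_0$ to a genuine torsion-free $G_2$-structure on a neighborhood of $K$ in $N_0$ by \CK theory. The closed/co-closed system $d\varphi=0$, $d{*}\varphi=0$ is precisely the involutive exterior differential system Bryant exploits in \cite{BrClbEm} to obtain the coassociative result quoted in the introduction, and the present associative case is structurally no harder, since the normal bundle carries no global topological obstruction (parallelizability of $K$ lets us take it trivial). The real analyticity of $K$, $g$, and $\varphi_0$ furnishes admissible Cauchy data on a real analytic integral submanifold of the EDS, and Cartan-K\"ahler produces a real analytic torsion-free $G_2$-form $\varphi$ on some open neighborhood $N\subset N_0$ of $K\times\{0\}$ with $\varphi|_K=\varphi_0$. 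Since $\varphi|_{TK}=\tvol_K$ and $d\varphi=0$, the embedding $K\hookrightarrow N$ is calibrated and hence associative, with induced metric $g$ by construction.

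It remains to upgrade $r$ to a genuine $G_2$-involution of $(N,\varphi)$. The differential $r_*$ along $K\times\{0\}$ is the block map $\tId_{TK}\op(-\tId_\mathbb{H})$; a direct check against the standard coordinate formula for $\varphi$ (each monomial contains an even number of normal indices) shows that this element lies in $G_2\subset\tGL(\tIm(\bO))$, so $r^*\varphi_0=\varphi_0$. To promote this pointwise invariance to $r^*\varphi=\varphi$ on all of $N$, I would run the Cartan-K\"ahler extension in the $r$-equivariant category, restricting at each prolongation to $r$-invariant integral elements; since $K$ is fixed pointwise by $r$, the seed data $\varphi_0$ is automatically invariant. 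The main anticipated obstacle is verifying that this equivariant restriction preserves involutivity of the underlying EDS. This reduces to a representation-theoretic check that $r$ acts trivially on the tableaux and torsion spaces governing Bryant's system at points of $K$, which is automatic because the $r$-action already sits inside the structure group $G_2$, so the $r$-fixed subsystem inherits the Cartan-K\"ahler dimension count from the ambient involutive system. Shrinking $N$ if necessary, the resulting $(N,\varphi,r)$ satisfies all the conclusions of the theorem.
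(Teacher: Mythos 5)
Your construction is essentially the one in the paper: the ambient space is $K\times\bR^4$ with the involution $r(x,y)=(x,-y)$, parallelizability of $K$ (made real analytic via Bochner's approximation theorem) supplies the coframing that calibrates $K$ along the zero section, a Cartan--K\"ahler extension of the torsion-free $G_2$-structure system produces $(N,\varphi)$, and the observation that $\tId_3\op(-\tId_4)\in G_2$ (every monomial of $\varphi_0$ has an even number of normal indices) is exactly the paper's.

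The one place where you assert what the paper has to prove is the sentence ``the present associative case is structurally no harder,'' together with the single invocation of Cartan--K\"ahler. Bryant's analysis gives ordinariness of the integral elements of the torsion-free $G_2$ system (set up on the bundle $S=\cF/G_2$, not literally as the pair $d\varphi=0$, $d{*}\varphi=0$ on $N$), but your Cauchy datum is a $3$-dimensional integral manifold inside the $42$-dimensional $S$, so you cannot reach a $7$-dimensional integral section in one step: you need a chain of four thickenings $X_3\subset X_4\subset\cdots\subset X_7$, and at each stage you must (a) verify that the current $X_k$ is a \emph{regular} integral manifold and (b) construct a restraining manifold of the correct dimension transverse to the polar spaces. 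Point (a) is the technical heart of the paper: one computes the polar spaces of the canonical flag via the subalgebras $\fh_k$, finds $(c_0,\ldots,c_6)=(0,0,0,1,5,15,28)$ with $\sum c_j=49=\tcodim(V_7(\cI,\pi),\tGr_7(T\cF))$, and concludes regularity from Cartan's Test. This computation starts from a $3$-plane and is not a corollary of the coassociative case, which starts from a $4$-plane. Point (b) is also where the $r$-equivariance is actually implemented: rather than an ``equivariant Cartan--K\"ahler'' (whose preservation of involutivity you only gesture at), the paper chooses each restraining manifold $Z_k$ to be invariant under the lift of $r$ and then replaces the resulting integral manifold $Y_{k+1}$ by $Y_{k+1}\cap r(Y_{k+1})$, which is again an integral manifold of the right dimension by the uniqueness clause of Cartan--K\"ahler. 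Your outline is correct, but without the regularity computation and the explicit restraining manifolds the extension step is not yet a proof.
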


\noindent{\it Remark.}  Bryant showed that $K$ isometrically embeds as a special Lagrangian submanifold of a Calabi-Yau 3-fold $CY$.  This immediately yields an elementary version of Theorem \ref{thm:assoc} as $N = CY \times \bR$ naturally carries a $G_2$ structure such that $A$ isometrically embeds as an associative submanifold.  However, $CY \times \bR$ has holonomy a subgroup of $\tSU(3)$.  It may be checked that, as long as $A$ is not flat, the $N$ of Theorem \ref{thm:assoc} has holonomy exactly $G_2$.  In particular, these $N$ are not of the form $CY \times \bR$.  See the remark at the end of \S\ref{sec:assocproof}.
\begin{thm}\label{thm:cayley}
Assume $(K^4,g)$ is a closed, oriented, real analytic Riemannian
4-manifold with a trivial bundle of self-dual 2-forms. Then there
exists a $\tSpin(7)$-manifold $(N^8,\Psi)$ and an isometric
embedding $i : K \hookrightarrow N$ whose image is a Cayley
submanifold in $N$. Moreover, $(N,\Psi)$ can be chosen so that
$i(K)$ is the fixed locus of a nontrivial $\tSpin(7)$-involution
$r : N \to N$.
\end{thm}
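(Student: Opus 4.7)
The plan is to mirror the Cartan-K\"ahler construction used for Theorem~\ref{thm:assoc} and for Bryant's coassociative embedding in \cite{BrClbEm}, replacing $\G2$-geometry on the normal bundle of a 3-fold by $\tSpin(7)$-geometry on the normal bundle of a 4-fold.  First I would realize the candidate ambient space as the total space of a real analytic, rank-4 vector bundle $\pi : E \to K$, intended to serve as the normal bundle of the eventual Cayley embedding.  The role of the hypothesis that $\Lambda^2_+ T^*K$ is trivial is to allow a compatible framing: a Cayley splitting $\bO = V \op V^\perp$ into two Cayley 4-planes is determined up to $\tSpin(7)$ by additional self-dual 2-form data on $V$, so a global trivialization of $\Lambda^2_+ T^*K$ lets us identify $T_xK \op E_x$ with $\bO$ pointwise in a $\tSpin(7)$-compatible fashion.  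Since $(K,g)$ is real analytic and the Akbulut-King/Bochner approximation established earlier in the paper permits a real analytic trivialization, $E$ itself may be taken real analytic.  Let $r : E \to E$ denote fiberwise negation; its fixed locus is the zero section $K \subset E$.

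Second, on $E$ I would set up an exterior differential system $\sI$ whose integral solutions are the $r$-invariant, closed 4-forms $\Psi$ which are pointwise of Cayley type and which restrict along the zero section to $\tvol_g$.  Pointwise Cayleyness yields a $\tSpin(7)$-structure whose induced metric restricts to $g$ and makes $K$ a Cayley submanifold.  The equivariance $r^*\Psi = \Psi$ promotes $r$ to the desired nontrivial $\tSpin(7)$-involution and, at the level of jets, splits the transverse expansion of $\Psi$ along $K$ by parity: only components with an even number of vertical factors are allowed, so the system has essentially one prescribed initial datum per pair of fiber directions.

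The core task is to verify that $\sI$ is involutive and then apply Cartan-K\"ahler to the initial data $(K,g)$ together with the chosen trivialization of $\Lambda^2_+ T^*K$.  The character computation should parallel Bryant's coassociative count, adjusted for the extra normal dimension and for the parity restriction; concretely, one must show that the symbol of $\td\Psi = 0$ is surjective onto the appropriate quotient of $r$-invariant 5-forms modulo the first-order algebraic Cayley constraints.  Once involutivity holds, Cartan-K\"ahler produces a real analytic, $r$-invariant Cayley form $\Psi$ on an $r$-invariant open neighborhood $N$ of $K$ in $E$, providing the $\tSpin(7)$-manifold, the isometric embedding $i : K \hookrightarrow N$ as the zero section, and the $\tSpin(7)$-involution $r$.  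The principal obstacle I anticipate is precisely the involutivity check: the parity constraint alters Bryant's tableau nontrivially, and one must reconfirm that enough torsion is absorbed after prolongation; the algebra of $\tSpin(7)$-representations on $\Lambda^* \bO^*$ is more intricate than its $\G2$ counterpart and is where the computation is most delicate.
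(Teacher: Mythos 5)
Your outline points in the right general direction (thicken $K$ inside a trivialized normal bundle and apply Cartan-K\"ahler), but it has two genuine gaps. First, the formulation of the exterior differential system is not the one that works. You propose to take as unknown a closed $4$-form $\Psi$ on $E$ that is ``pointwise of Cayley type.'' Unlike the $G_2$ case, where the $\tGL(7)$-orbit of $\varphi_0$ is open in $\Lambda^3(\bR^7)^*$, the $\tGL(8)$-orbit of $\Psi_0$ has dimension $64-21=43$ inside the $70$-dimensional $\Lambda^4(\bR^8)^*$, so ``Cayley type'' is a codimension-$27$ algebraic constraint and your system is not in linear form in any obvious way; the symbol/involutivity computation you defer is exactly where this bites. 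The paper instead follows Bryant and works on $S=\cF/\tSpin(7)$: the unknown is a section $\sigma:M\to S$ of the quotient of the coframe bundle (so the pointwise algebraic condition is built in), the ideal $\cI$ is generated by $\td\widehat{\Psi_0}$, and strong admissibility of $\tSpin(7)$ guarantees that transverse integral sections are torsion-free. The real content of the proof is then the verification that $\tcodim(V_8(\cI,\pi),\tGr_8(T\cF))=56=\tdim\,\fh^0(\fspin(7))$ and that the polar-space codimensions are $(c_0,\dots,c_7)=(0,0,0,0,1,5,15,35)$ with $\sum c_j=56$, i.e.\ that $\tSpin(7)$ is regularly presented so Cartan's Test applies. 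Your proposal explicitly leaves this computation (your ``principal obstacle'') undone; without it there is no proof.

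Second, your mechanism for producing the involution is not the one that succeeds. The paper takes $M=\bR^4\times A$ (the normal bundle is literally trivial, via the $\tSU(2)$-reduction coming from the trivialization of $\Lambda^2_+$ by $\Omega_1,\Omega_2,\Omega_3$), with $r(y,p)=(-y,p)$ lifted to $\cF$ by an element $R\in\tSpin(7)$. The $r$-invariance of the final integral manifold is obtained not by imposing a parity constraint on the tableau (which, as you concede, you cannot control), but by choosing the restraining manifolds at each thickening step to be $r$-invariant --- built from subspaces $W_{d}\subset M_8\bR$ with $W_{d}\cap\fh_{d}=\{0\}$, $RW_{d}=W_{d}$, and $\tSU(2)\,W_{d}\subset W_{d}$ --- and then invoking the uniqueness part of Cartan-K\"ahler to conclude that $Y\cap r(Y)$ is again an integral manifold. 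The $\tSU(2)$-invariance condition, needed because the coframing adapted to the $\Omega_j$ is only determined up to $\tSU(2)$, is a point your outline misses entirely, as is the inductive structure of the thickening (through integral manifolds $X_4\subset X_5\subset\cdots\subset X_8$ of prescribed extension ranks) that the regular presentation makes possible.
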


We refer the reader to \cite[\S0.4]{BrClbEm} for a discussion of Cartan-K\"ahler theory that will be used in the constructions.

\vspace{.1in}

Making use of the real analytic implicit function theorem and a
theorem of Nash-Tognoli we are able to show that Theorems
\ref{thm:assoc} and \ref{thm:cayley} extend to interiors of
compact, smooth manifolds.  In particular, assume that $K$ is a
compact, oriented, smooth manifold, possibly with boundary. Let
$\mathit{doub}(K)$ denote the {\it doubling of $K$}: glue two
copies of $K$ together along the boundary with the identity map.
If $K$ is closed ($\partial M = \emptyset$) then $\mathit{doub}(K)
= K$.  The manifold $\mathit{doub}(K)$ is closed and orientable, and admits
the structure of a real analytic Riemannian manifold; see Lemma
\ref{lem:analyticstr}.  Then we have the following two
corollaries.

\begin{thm}\label{thm:openassoc}
Let $A$ be the interior of a smooth, orientable, compact
3-manifold $K$ with nonempty boundary. Then $A$ admits a
compatible real analytic Riemannian structure. There exists a
$\G2$-manifold $(N^7,\varphi)$ and an isometric embedding $i : A
\hookrightarrow N$ such that $i(A)$ is an associative submanifold
in $N$. Moreover $(N,\varphi)$ may be chosen so that $i(A)$ is the
fixed locus of a nontrivial $G_2$-involution $r: N \to N$.
\end{thm}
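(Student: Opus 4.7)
The strategy is to reduce Theorem \ref{thm:openassoc} to Theorem \ref{thm:assoc} via the doubling construction. First I would invoke Lemma \ref{lem:analyticstr} to equip $\mathit{doub}(K)$ with a real analytic structure compatible with its smooth structure; since $A$ is diffeomorphic to an open subset of $\mathit{doub}(K)$, this restricts to a compatible real analytic structure on $A$, giving the first assertion of the theorem.

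Second, I would equip $\mathit{doub}(K)$ with a real analytic Riemannian metric $\tilde{g}$. The natural approach is to choose any smooth Riemannian metric on $\mathit{doub}(K)$ and then approximate it in the $C^\infty$-topology by a real analytic metric, using the Bochner-type approximation theorem which, as noted in the abstract, also follows from the Akbulut--King algebraic framework. The theorem does not require that $\tilde{g}$ restrict to any pre-specified smooth metric on $A$; it only asserts that $A$ admits \emph{some} compatible real analytic Riemannian structure, and $\tilde{g}|_A$ provides this.

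Third, I would apply Theorem \ref{thm:assoc} to the closed, oriented, real analytic Riemannian 3-manifold $(\mathit{doub}(K),\tilde{g})$, obtaining a $\G2$-manifold $(N^7,\varphi)$, an isometric embedding $\tilde{\imath} : \mathit{doub}(K) \hookrightarrow N$ with associative image, and a nontrivial $G_2$-involution $r : N \to N$ fixing $\tilde{\imath}(\mathit{doub}(K))$. Setting $i := \tilde{\imath}|_A$ produces an isometric embedding; because associativity is a pointwise condition on tangent spaces, $i(A)$ is associative in $N$, and since $A \subset \mathit{doub}(K)$ the image $i(A)$ sits inside the fixed locus of $r$.

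The main obstacle lies in the preparatory step of putting a real analytic Riemannian structure on $\mathit{doub}(K)$: the existence of a compatible real analytic structure comes from Lemma \ref{lem:analyticstr} (via Nash--Tognoli), and the promotion from a smooth to a real analytic metric relies on the Bochner/Akbulut--King approximation result. Once these are in hand, Theorem \ref{thm:assoc} carries the geometric content and the conclusion follows by restriction to the open subset $A$.
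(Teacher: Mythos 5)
Your route is genuinely different from the paper's. The paper does not apply Theorem \ref{thm:assoc} to $\mathit{doub}(K)$ and restrict; instead it uses Lemma \ref{lem:analyticstr} only to produce the real analytic Riemannian structure and parallelization on $\mathit{doub}(K)$, restricts those data to $A = \mathit{int}(K)$, and then runs the entire Cartan--K\"ahler thickening over the open manifold $A$ itself (setting $M = A \times \bR^4$, so that Theorems \ref{thm:assoc} and \ref{thm:openassoc} share a single construction). Your reduction --- black-boxing Theorem \ref{thm:assoc} on the closed double and restricting the resulting embedding --- is more economical and is sound for the existence of the isometric embedding and for associativity, since the calibration condition $i^*\varphi = \tvol$ is pointwise and therefore inherited by the open subset $\tilde{\imath}(A) \subset \tilde{\imath}(\mathit{doub}(K))$.

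However, there is a gap in your treatment of the final clause. Theorem \ref{thm:openassoc} asserts that $i(A)$ \emph{is} the fixed locus of $r$, whereas in the $N$ produced by applying Theorem \ref{thm:assoc} to $\mathit{doub}(K)$ the fixed locus of $r$ is all of $\tilde{\imath}(\mathit{doub}(K))$, which strictly contains $i(A)$ (the difference being the image of the boundary and of the second copy of the interior). Your own phrasing --- that $i(A)$ ``sits inside'' the fixed locus --- concedes exactly this discrepancy without resolving it. The repair is short but must be stated: replace $N$ by $N' := N \setminus \tilde{\imath}\bigl(\mathit{doub}(K) \setminus A\bigr)$. Since $\mathit{doub}(K)\setminus A$ is compact, its image is closed in $N$, so $N'$ is an open $r$-invariant $G_2$-manifold containing $i(A)$, the restriction $r|_{N'}$ is still a nontrivial $G_2$-involution (its fixed locus is $3$-dimensional while $N'$ is $7$-dimensional), and its fixed locus is now exactly $i(A)$. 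With that sentence added your argument proves the theorem; the paper's construction avoids the issue altogether because its ambient manifold is built over $A$ alone.
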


\begin{thm}\label{thm:opencayley}
Let $A$ be the interior of a smooth, orientable, compact
4-manifold $K$ with nonempty boundary. Then $A$ admits a
compatible real analytic Riemannian structure. Assume also that
the bundle of self-dual 2-forms over $doub(K)$ is trivial. There
exists a $\tSpin(7)$-manifold $(N^8,\Psi)$ and an isometric
embedding $i : A \hookrightarrow N$ whose image $i(A)$ is a Cayley
submanifold in $N$. Moreover, $(N,\Psi)$ may be chosen so that
$i(A)$ is the fixed point set of a nontrivial
$\tSpin(7)$-involution $r: N \to N$.
\end{thm}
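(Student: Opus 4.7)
The plan is to reduce Theorem \ref{thm:opencayley} to Theorem \ref{thm:cayley} applied to the doubling $\mathit{doub}(K)$, then to trim the resulting $\tSpin(7)$-manifold down to an $r$-invariant open neighborhood of $i(A)$. The strategy is formally parallel to (and no harder than) the reduction needed for Theorem \ref{thm:openassoc}.

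Step 1 (real analytic structures). By Lemma \ref{lem:analyticstr}, the closed smooth manifold $\mathit{doub}(K)$ carries a compatible real analytic structure, whose restriction to the open subset $A = \text{int}(K) \subset \mathit{doub}(K)$ gives $A$ the desired compatible real analytic structure. Next, pick any smooth Riemannian metric on $\mathit{doub}(K)$ and invoke Bochner's real analytic approximation theorem, in the form established in the paper via the Akbulut--King real algebraic tools, to replace it by a real analytic Riemannian metric $g$ on $\mathit{doub}(K)$; its restriction to $A$ furnishes the asserted real analytic Riemannian structure.

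Step 2 (reduce to the closed case). The triviality assumption on the self-dual 2-forms of $\mathit{doub}(K)$, together with the analytic structure and metric from Step 1, places $(\mathit{doub}(K),g)$ in the hypotheses of Theorem \ref{thm:cayley}. Applying that theorem produces a $\tSpin(7)$-manifold $(\tilde N,\tilde\Psi)$, an isometric Cayley embedding $\tilde\imath : \mathit{doub}(K) \hookrightarrow \tilde N$, and a nontrivial $\tSpin(7)$-involution $\tilde r : \tilde N \to \tilde N$ whose fixed locus is exactly $\tilde\imath(\mathit{doub}(K))$.

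Step 3 (trim to $A$). It remains to modify $\tilde N$ so that the fixed locus of the involution becomes $i(A)$ rather than the whole of $\tilde\imath(\mathit{doub}(K))$. Since $A$ is open in $\mathit{doub}(K)$, we may pick an open set $W \subset \tilde N$ with $W \cap \tilde\imath(\mathit{doub}(K)) = \tilde\imath(A)$; set $N := W \cap \tilde r(W)$. Then $N$ is open and $\tilde r$-invariant, and because $\tilde r$ fixes $\tilde\imath(\mathit{doub}(K))$ pointwise,
\[
 N \cap \tilde\imath(\mathit{doub}(K)) \;=\; \bigl(W \cap \tilde\imath(\mathit{doub}(K))\bigr) \cap \tilde r\bigl(W \cap \tilde\imath(\mathit{doub}(K))\bigr) \;=\; \tilde\imath(A).
\]
Letting $\Psi := \tilde\Psi|_N$, $r := \tilde r|_N$, and $i := \tilde\imath|_A$, the pair $(N,\Psi)$ is a $\tSpin(7)$-manifold, $i : A \hookrightarrow N$ is an isometric embedding whose image is Cayley (being Cayley is a local condition), and the fixed locus of the nontrivial $\tSpin(7)$-involution $r$ is precisely $i(A)$.

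I do not anticipate any genuinely new obstacle: the analytic/algebraic machinery (Nash--Tognoli and Bochner) is bundled into Lemma \ref{lem:analyticstr} and the discussion preceding it, and the $\tSpin(7)$ geometry is handled by Theorem \ref{thm:cayley}. The only step requiring care is Step 3, and that reduces to the point-set observation above exploiting that $\tilde r$ acts as the identity on the analytic Cayley submanifold $\tilde\imath(\mathit{doub}(K))$.
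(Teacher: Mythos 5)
Your proof is correct, but it takes a genuinely different route from the paper's. The paper does not invoke Theorem \ref{thm:cayley} as a black box: instead it runs a single Cartan--K\"ahler thickening argument that covers the closed and open cases simultaneously --- it endows $\mathit{doub}(K)$ with a compatible real analytic Riemannian structure and real analytic self-dual $2$-forms $\Omega_j$ via Lemma \ref{lem:analyticstr} and Theorem \ref{thm:altboch}, restricts these data to $A=\mathit{int}(K)$, sets $M=\bR^4\times A$, and builds the torsion-free $\tSpin(7)$-structure on a neighborhood $N$ of $A$ in $M$ directly. You instead apply the closed-case theorem to $\mathit{doub}(K)$ and then trim the resulting $\tSpin(7)$-manifold to the $\tilde r$-invariant open set $N=W\cap\tilde r(W)$; your point-set computation showing $\mathrm{Fix}(\tilde r)\cap N=\tilde\imath(A)$ is correct, and every property you need (closedness of $\Psi|_N$, the calibration condition, isometry, closedness of $\tilde\imath(A)$ in $N$, nontriviality of $r|_N$) is indeed local or inherited by open subsets. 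What your reduction buys is logical economy, plus the extra information that $N$ sits inside a $\tSpin(7)$-manifold containing the compact Cayley $4$-fold $\tilde\imath(\mathit{doub}(K))$; what the paper's uniform treatment buys is mainly expository, since one construction serves both theorems with no separate trimming step. One cosmetic remark: your appeal to Bochner/Theorem \ref{thm:altboch} for the metric in Step 1 is redundant (and that theorem is stated for differential forms rather than symmetric $2$-tensors), since Lemma \ref{lem:analyticstr} already supplies the real analytic metric $g$ directly from the Nash--Tognoli embedding.
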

\noindent Theorems \ref{thm:assoc} \& \ref{thm:openassoc} and
Theorems \ref{thm:cayley} \& \ref{thm:opencayley} are proven in
\S\ref{sec:assocproof} and \ref{sec:cayleyproof}, respectively.
Also note that in all these theorems $N$ does not have to be a
(locally) product manifold.

\medskip

\noindent {\it Acknowledgments.}  The authors thank Robert Bryant
and Selman Akbulut for illuminating discussions. This paper was
strongly influenced by Bryant's \cite{BrClbEm}.

\section{Associative submanifolds of $\G2$-manifolds}
\subsection{\boldmath $\G2$-manifolds and the associative
calibration \boldmath}\label{sec:G2}

On the imaginary octonians $\bR^7 = \tIm(\bO)$ let $x = (x^j)$
denote the standard linear coordinates, set $\dx^{jk} :=
\dx^j\wedge\dx^k$, and define the 3-forms $\dx^{jk\ell} := \dx^j
\wedge \dx^k \wedge \dx^\ell$ and
\begin{displaymath}
  \varphi_0 := \dx^{123} + \dx^1 \wedge \left( \dx^{45} + \dx^{67} \right)
          + \dx^2 \wedge \left( \dx^{46} - \dx^{57} \right)
          + \dx^3 \wedge \left( - \dx^{47} - \dx^{56} \right) \, .
\end{displaymath}
The simple Lie group $\G2$ is the subgroup of $\tGL(7)$ preserving
$\varphi_0$ \cite{BrExHol}.

\vspace{.1in}

A {\it $\G2$-structure} on $M^7$ is a principle right $\G2$-bundle $\pi:
P \to M$.  The elements of $P_x = \pi^{-1}(x)$ are linear
isomorphisms $u : T_x M \to \bR^7$, and the right action is given
by $u \cdot a = a^{-1} \circ u$.  The $\G2$-structure induces a
well-defined 3-form $\varphi$ on $M$ via $\varphi_x =
u^*\varphi_0$. Additionally, $M$ admits a unique metric $g$ and
volume form $\ast 1$ (also obtained by pull-back) for which $u :
T_x M \to \bR^7$ is an oriented isometry. In particular,
$(\ast\varphi)_x = u^*(\ast\varphi_0)$.

\vspace{.1in}

We say that $(M,\varphi)$ is a {\it $\G2$-manifold} when $\varphi$
and $\ast\varphi$ are closed. Equivalently, the $\G2$-structure is
torsion-free \cite{FG}. In this case, $\varphi$ is parallel, $M$
is Ricci-flat \cite[10.64]{Be}, and the metric is real analytic
in harmonic coordinates \cite[Th. 5.2]{DK}.  Since $\varphi$ is harmonic it follows that $\varphi$ is real analytic as well.

\vspace{.1in}

Assume $(M,\varphi)$ is a $\G2$-manifold. Then
$\varphi$ is the {\it associative calibration}.  The 3-dimensional
submanifolds $i : X^3 \hookrightarrow M$ calibrated by $\varphi$
are the {\it associative submanifolds}. Associative submanifolds
are plentiful in $\G2$-manifolds: it is a consequence of the Cartan-K\"ahler theorem \cite[\S0.4]{BrClbEm} that every associative $E^3 \subset T_zM$ is
tangent to an associative $X^3 \subset M$. Moreover,

\begin{lem}
\label{lem:assoc} Every real-analytic 2-dimensional submanifold
$Y^2$ of a $\G2$-manifold $(M^7,\varphi)$ lies in a unique
associative $X^3$.
\end{lem}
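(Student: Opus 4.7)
The plan is to apply the real-analytic Cartan-K\"ahler theorem to the exterior differential system $\cI$ whose 3-dimensional integral manifolds are the associative submanifolds of $M$. To set up $\cI$, I introduce the vector-valued 3-form
\[
\chi(a,b,c) \ = \ (ab)c - a(bc) \, ,
\]
the octonion associator, viewed pointwise on $T_y M$ via any $G_2$-frame identification $T_y M \cong \tIm(\bO)$. Since $G_2$ acts on $\bO$ by algebra automorphisms, $\chi$ descends to a globally well-defined, real-analytic $\tIm(\bO)$-valued 3-form on $M$. A 3-plane $E \subset T_y M$ is associative iff $\chi|_E = 0$, so the associative submanifolds are exactly the 3-dimensional integral manifolds of the real-analytic ideal $\cI$ generated by the scalar components of $\chi$.

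The 2-manifold $Y$ is automatically an integral manifold of $\cI$, since $\chi$ has degree 3. The crucial step is to verify that each $T_y Y$ is an ordinary integral element whose polar space
\[
H(T_y Y) \ = \ \{\, v \in T_y M : \chi(v, e_1, e_2) = 0 \ \text{for all} \ e_1, e_2 \in T_y Y \,\}
\]
has dimension exactly 3. For orthonormal $e_1, e_2 \in \tIm(\bO)$, the associator $[v, e_1, e_2]$ vanishes iff $e_1, e_2, v$ all lie in a common quaternion subalgebra of $\bO$; since $e_1, e_2$ generate the unique quaternion subalgebra $\tspan_\bR\{1, e_1, e_2, e_1 \times e_2\}$, this forces $v \in \tspan\{e_1, e_2, e_1 \times e_2\}$. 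Hence $\dim H(T_y Y) = 3$, uniformly in $y$ along $Y$.

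Because $\dim H(T_y Y) = 3 = \dim T_y Y + 1$, the polar space determines a \emph{unique} 3-dimensional integral element extending $T_y Y$, namely $H(T_y Y)$ itself; moreover the variety of 2-dimensional integral elements at $y$ is all of $\tGr(2, T_y M)$ (since the 2-plane condition is vacuous), so $T_y Y$ is ordinary. The Cartan-K\"ahler theorem then produces, in a neighborhood of each $y \in Y$, a real-analytic associative 3-fold $X^3$ containing $Y$, and the polar-dimension equality forces this extension to be locally unique. The local germs glue along $Y$ into a unique global germ of associative $X^3$ by real-analytic continuation of the polar 3-plane field $y \mapsto H(T_y Y)$, using the connectedness of $Y$ and the fact that a real-analytic associative 3-fold is determined by its tangent distribution. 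The principal technical point is the associator kernel computation, which captures how the non-associativity of $\bO$ singles out the unique associative completion of each 2-plane.
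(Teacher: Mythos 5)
Your argument follows essentially the same route as the paper: set up the exterior differential system generated by the components of the vector-valued $3$-form $\chi$, observe that $Y$ is trivially an integral manifold, show the polar space of each $T_yY$ is $3$-dimensional, and invoke Cartan--K\"ahler. Your associator-kernel computation is a clean, coordinate-free substitute for the paper's explicit computation with $\chi_0$: writing $\bO = \mathbb{H} \oplus \mathbb{H}\epsilon$ for the quaternion subalgebra $\mathbb{H}$ generated by the orthonormal pair $e_1,e_2$, one finds $[x\epsilon, e_1, e_2] = (x\,[e_1,e_2])\epsilon \neq 0$ for $x \neq 0$, so the kernel of $v \mapsto [v,e_1,e_2]$ on $\tIm(\bO)$ is exactly $\tspan\{e_1,e_2,e_1e_2\}$, matching the paper's $H(T_yY) = \tspan\{e_1,e_2,e_3\}$. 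The uniqueness argument via extension rank zero is also the paper's.

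There is, however, one genuine omission: you never verify that the ideal $\cI$ algebraically generated by the components of $\chi$ is closed under exterior differentiation, which is a hypothesis of the Cartan--K\"ahler theorem (otherwise one must pass to the differential closure, which could add lower-degree generators and destroy the claim that every $2$-plane is an integral element). This is precisely where the hypothesis that $(M,\varphi)$ is a $\G2$-\emph{manifold} --- i.e.\ $\td(\ast\varphi)=0$, not merely that $M$ carries a $\G2$-structure --- must enter, and in your write-up it enters nowhere; the statement is false for a general $\G2$-structure with torsion, so some such step is unavoidable. The paper isolates this as a separate lemma, computing $\td\chi_j = -\cL_{e_j}(\ast\varphi) = (e_j \lrcorner\, \td\w^k)\wedge\chi_k \in \cI$ via the Cartan formula and closedness of $\ast\varphi$; equivalently one may note that $\chi$ is parallel on a $\G2$-manifold. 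A second, more minor point: Cartan--K\"ahler requires $T_yY$ to be \emph{regular}, i.e.\ the polar dimension must be locally constant on the variety of $2$-dimensional integral elements near $T_yY$, not merely constant along $Y$; this does follow from your computation since it applies to every orthonormal pair (equivalently, since $\G2$ acts transitively on $2$-planes), but it should be said.
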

\noindent The flat case $(M,\varphi) = (\bR^7,\varphi_0)$ was
proven by Harvey and Lawson \cite[Th.4.1]{HL}.  Given Lemma
\ref{lem:closed-assoc-ideal} below, the proof (at the end of this section) is a simple application of the Cartan-K\"ahler theorem \cite[\S0.4]{BrClbEm}.

\vspace{.1in}

The fundamental identity \cite[Th.1.6]{HL}
\begin{displaymath}
  \varphi(u,v,w)^2 + | \chi(u,v,w) |^2 \ = \ | u\wedge v \wedge w|^2
\end{displaymath}
implies that $i^*\varphi = \textrm{d} \textit{vol}$ precisely when
$i^*\chi = 0$. Here $\chi$ is the vector-valued 3-form defined by
\begin{displaymath}
  \langle \chi(u,v,w) , z \rangle = \ast\varphi(u,v,w,z) \, .
\end{displaymath}
In particular, the associative submanifolds are the 3-dimensional
integral manifolds of $\{\chi = 0\}$. In the flat case
$(\bR^7,\varphi_0)$,
\begin{displaymath}
  \ast\varphi_0 = \dx^{4567} + \dx^{23}\wedge\left( \dx^{45} + \dx^{67} \right)
  + \dx^{31} \wedge \left( \dx^{46} - \dx^{57} \right)
  + \dx^{12} \wedge \left(  - \dx^{47} - \dx^{56} \right) \, ,
\end{displaymath}
and
\begin{eqnarray*}
  \chi_0 & = & - \ \left( \dx^{357} - \dx^{346} - \dx^{256} - \dx^{247} \right)
               \del_{x^1} \\
         & & - \ \left( \dx^{367} + \dx^{345} + \dx^{156} + \dx^{147} \right)
               \del_{x^2} \\
         & &  + \ \left( \dx^{267} + \dx^{245} + \dx^{157} - \dx^{146} \right)
               \del_{x^3} \\
         & & - \ \left( \dx^{567} + \dx^{235} - \dx^{136} - \dx^{127} \right)
               \del_{x^4} \\
         & &  + \ \left( \dx^{467} + \dx^{234} - \dx^{137} + \dx^{126} \right)
               \del_{x^5} \\
         & & - \  \left( \dx^{457} + \dx^{237} + \dx^{134} + \dx^{125} \right)
               \del_{x^6} \\
         & &  + \ \left( \dx^{456} + \dx^{236} + \dx^{135} - \dx^{124} \right)
               \del_{x^7} \, .
\end{eqnarray*}
Notice that the coefficient 3-forms are $\chi_{0,j} := -\del_{x^j}
\, \lrcorner \, (\ast\varphi_0)$.

\vspace{.1in}

Given an arbitrary $\G2$ manifold $(M^7,\varphi)$, let $\{ \w^1 \,
, \, \ldots \, , \, \w^7 \}$ be a local $\G2$ coframing.  That is,
$\w^j_x = u_x^* \dx^j$ for smoothly varying isometries $u_x : T_x
M \to \bR^7$ in $P_x$.  Let $\{ e_j \}$ denote the dual framing.
Then local expressions for $\varphi = u^*\varphi_0$, $\ast\varphi
= u^*(\ast\varphi_0)$ and $\chi = u^*\chi_0$ are given by
replacing the terms $\dx$ and $\del_x$ in $\varphi_0$,
$\ast\varphi_0$ and $\chi_0$ with $\w$ and $e$, respectively.  The
associative submanifolds of $(M,\varphi)$ are the 3-dimensional
integral manifolds of $\{\chi_j := -e_j \, \lrcorner \,
(\ast\varphi)\}$.

\begin{lem}\label{lem:closed-assoc-ideal}
Let $\cI$ be the ideal algebraically generated by the coefficient
3-forms $\chi_j$.  Then $\cI$ is well-defined and closed under
exterior differentiation ($\td\cI \subset \cI$).
\end{lem}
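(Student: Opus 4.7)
The plan is to verify the two assertions separately, relying only on the intrinsic structure of the $\G2$-manifold (invariance of $\ast\varphi$, $\td(\ast\varphi)=0$, and $\nabla(\ast\varphi)=0$).

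First I would establish that $\cI$ is independent of the $G_2$-coframing, and hence a well-defined global ideal on $M$. The form $\chi_j = -e_j\lrcorner\ast\varphi$ is the $j$-th component, in the frame $\{e_j\}$ dual to $\{\w^j\}$, of an intrinsically defined vector-valued 3-form $\chi$ on $M$ characterized by $\langle \chi(u,v,w),z\rangle = \ast\varphi(u,v,w,z)$. Under a $\G2$-change of frame $e_j\mapsto g_j^k e_k$ with $g\in\G2$, the $\G2$-invariance of $\ast\varphi$ forces $\chi_j\mapsto g_j^k\chi_k$. Hence the algebraic ideal generated by $\chi_1,\ldots,\chi_7$ is the same for every $\G2$-coframe, and the local algebraic ideals patch together to a globally defined $\cI\subset\Omega^*(M)$.

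Next, to establish $\td\cI\subset\cI$ it suffices to show $\td\chi_j\in\cI$ for each $j$. Since $(M,\varphi)$ is a $\G2$-manifold, $\ast\varphi$ is closed and parallel with respect to the Levi-Civita connection $\nabla$. Cartan's magic formula and $\td(\ast\varphi)=0$ give
\[
  \td\chi_j \ = \ -\td(e_j\lrcorner\ast\varphi) \ = \ -\cL_{e_j}(\ast\varphi) + e_j\lrcorner\td(\ast\varphi) \ = \ -\cL_{e_j}(\ast\varphi).
\]
Applying the standard identity
\[
  (\cL_X\omega)(Y_1,\ldots,Y_p) \ = \ (\nabla_X\omega)(Y_1,\ldots,Y_p) + \sum_{i=1}^p \omega(Y_1,\ldots,\nabla_{Y_i}X,\ldots,Y_p)
\]
to the parallel 4-form $\omega=\ast\varphi$ with $X=e_j$, and expanding $\nabla_Y e_j=\theta_j^k(Y)\,e_k$ in terms of the Levi-Civita connection 1-forms $\theta_j^k$, a short antisymmetrization yields
\[
  \cL_{e_j}(\ast\varphi) \ = \ \sum_k \theta_j^k\wedge (e_k\lrcorner\ast\varphi) \ = \ -\sum_k \theta_j^k\wedge\chi_k.
\]
Consequently $\td\chi_j = \sum_k \theta_j^k\wedge\chi_k\in\cI$, and $\cI$ is a differential ideal.

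The only step requiring care is the passage from $\cL_{e_j}(\ast\varphi)$ to the wedge-product expression $\sum_k \theta_j^k\wedge\chi_k$: this is a routine consequence of $\cL_X$ being a derivation of the tensor algebra combined with $\nabla(\ast\varphi)=0$, but the sign and index bookkeeping is the only nontrivial piece of the argument. The flat model $(\bR^7,\varphi_0)$ is a useful sanity check, since the $\chi_{0,j}$ have constant coefficients and $\td\chi_{0,j}=0$ trivially, corresponding to the vanishing of the connection forms there.
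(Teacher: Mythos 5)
Your proof is correct and follows essentially the same route as the paper: $G_2$-invariance for well-definedness, then Cartan's magic formula together with $\td(\ast\varphi)=0$ to reduce $\td\chi_j$ to $-\cL_{e_j}(\ast\varphi)$. The only (harmless) difference is in the last step: the paper evaluates $\cL_{e_j}(\ast\varphi)$ directly in the coframing to get $\td\chi_j=(e_j\lrcorner\,\td\w^k)\wedge\chi_k$, whereas you use $\nabla(\ast\varphi)=0$ and the Levi-Civita connection forms to get $\td\chi_j=\theta_j^k\wedge\chi_k$ --- which is precisely how the paper handles the analogous Cayley statement (Lemma \ref{lem:closed-cayley-ideal}) --- and both expressions manifestly lie in $\cI$.
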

\begin{proof}
That $\cI$ is well-defined (i.e. does not depend on choice of
local $G_2$-coframing $u_x$) is immediate from the
$\G2$-invariance of the forms $\varphi$, $\ast\varphi$ and $\chi$.
To see that $\cI$ is differentially closed recollect that the Lie
derivative of any form $\a$ by a vector field $X$ is $\cL_X \a = X
\lrcorner \, d \a + d( X \lrcorner \, \a )$, so that
\begin{displaymath}\renewcommand{\arraystretch}{1.3}\begin{array}{rclr}
  d \chi_j & = & -d \left( e_j \, \lrcorner \, \ast\!\varphi \right) & \\
  & = & e_j \, \lrcorner \, d (\ast\varphi) \ - \ \cL_{e_j} \ast\!\varphi & \\
  & = & - \cL_{e_j} \ast\!\varphi \ ,
      & (\ast\varphi \textrm{ is closed})\, , \\
  & = & \left( e_j \, \lrcorner \, d\w^k \right) \wedge \chi_k \, .
      &
\end{array}\end{displaymath}
The last line follows from an application of \cite[\S{V}.8,
Ex.8]{Boothby}.
\end{proof}

\medskip

\noindent{\it Proof of Lemma \ref{lem:assoc}.} Since $Y$ is
2-dimensional and $\cI$ is generated by 3-forms, $Y$ is a priori
an integral manifold.  In order to apply the Cartan-K\"ahler theorem we must
show that: (i) $Y$ is regular; and (ii) the polar space $H(T_y Y)$
is of dimension $3$ for every $y \in Y$.  (See \cite[\S0.4]{BrClbEm} for a review of polar spaces, the variety of $p$-dimensional integral elements $V_p(\cI)_y$ in the Grassmannian $\tGr(p,T_yM)$ and the Cartan-K\"ahler theorem in this context.)

\vspace{.1in}

Regularity is easily confirmed, and in the course of doing so we
will see that the polar space is of dimension three. Fix $y\in Y$.
Since $\cI$ is generated by 3-forms, $V_2(\cI)_y = \tGr(2,T_yM)$,
and $V_2(\cI)$ is (trivially) a smooth submanifold of $\tGr(2,TM)$
near $T_y Y$.  Whence, $T_y Y$ is ordinary.

\vspace{.1in}

Because $\G2$ acts transitively on 2-planes, there is no loss of
generality in assuming that $T_y Y$ is spanned by $\{ e_1 , e_2
\}$.  The polar space of $T_y Y$ is $H(T_y Y) =
 \{ v \in T_y M \ | \ \psi(v,e_1,e_2) = 0 \ \forall \ \psi \in \cI^3 \}$.
In our case it is straightforward to see that $H(T_y Y) = \{ v \in
T_y M \ | \ \chi_j(v,e_1,e_2) = 0 \ \forall \ j \}$ is spanned by
$\{ e_1 , e_2 , e_3 \}$.  Whence the extension rank $r(T_yY) =
\tdim H(T_yY) - (2+1) = 0$ is constant function $V_2(\cI) =
\tGr(2,TM)$, and $Y$ is regular. The result follows from the Cartan-K\"ahler theory.\hfill\qed

\subsection{\boldmath$G_2$-involutions\unboldmath}\label{sec:G2invol}

One way of finding examples of associative submanifolds is to
investigate the fixed point sets of $G_2$ involutions,
\cite[Prop.10.8.1 ]{Joy1}.

\vspace{.1in}

Let $\sigma :M\rightarrow M$ be a nontrivial isometric involution
of a $G_2$-manifold $(M,g)$. This means that $\sigma:M\rightarrow
M$ is a diffeomorphism satisfying $\sigma^*(g)=g$ and
$\sigma^2=id$, but $\sigma\neq 1$.

\vspace{.1in}

\begin{lem}
\label{involution1}
Let $(M,\varphi,g)$ be  a $G_2$-manifold and let $\sigma:M\rightarrow M$ be a
nontrivial isometric involution preserving $\varphi$, i.e.
$\sigma^*(\varphi)=\varphi$. Then the fixed point set $A=\{p\in M
|\;\sigma(p)=p\}$ is an associative 3-fold in $M$.
\end{lem}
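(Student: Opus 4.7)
The plan is to show, at each fixed point $p \in A$, that $T_pA$ is an associative 3-plane in $T_pM$; granted this, $A$ is automatically an associative 3-fold.  For the submanifold structure: because $\sigma$ is a smooth isometric involution, $d\sigma_p$ is an orthogonal involution of $T_pM$ for every $p \in A$.  A standard application of the equivariant tubular neighbourhood theorem --- equivalently, linearising $\sigma$ in geodesic normal coordinates based at $p$, where $\sigma$ is just its own derivative --- shows that, near $p$, $A$ is a smooth totally geodesic submanifold with $T_pA = \ker(d\sigma_p - \tId)$.

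Now choose a $\G2$-frame $u \in P_p$, i.e.\ an isometry $u : T_pM \to \bR^7$ with $u^*\varphi_0 = \varphi_p$, and set $\tau := u \circ d\sigma_p \circ u^{-1}$.  Since $d\sigma_p$ preserves both $g_p$ and $\varphi_p$, and since isometries of a connected Riemannian manifold are determined by their $1$-jet at any single point (so $\sigma \ne \tId$ forces $d\sigma_p \ne \tId$), $\tau$ is a nontrivial involution in $\G2 \subset \tSO(7)$.  The heart of the argument is now to identify the $(+1)$-eigenspace $V_+$ of $\tau$ in $\bR^7$ with an associative 3-plane.  Split $\bR^7 = V_+ \op V_-$ into the $(\pm 1)$-eigenspaces of $\tau$ and decompose $\varphi_0 = \varphi_{30} + \varphi_{21} + \varphi_{12} + \varphi_{03}$ accordingly, where $\varphi_{ij} \in \Lambda^i V_+^* \otimes \Lambda^j V_-^*$.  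Because $\tau^*\varphi_{ij} = (-1)^j \varphi_{ij}$, the identity $\tau^*\varphi_0 = \varphi_0$ forces $\varphi_{21} = \varphi_{03} = 0$.  Combined with $\det\tau = +1$ (so $\dim V_-$ is even) and the fact that $-\tId \notin \G2$ (since $-\tId$ negates $\varphi_0$), the only splitting compatible with $\tau \ne \pm\tId$ and with the nondegeneracy of $\varphi_0$ is $(\dim V_+, \dim V_-) = (3,4)$, with $\varphi_{30}$ the volume form on $V_+$.  Equivalently, one may appeal to the standard classification of involutive elements of $\G2$: up to $\G2$-conjugacy there is a unique nontrivial involution, its centralizer is $\tSO(4)$, and its fixed subspace in $\tIm(\bO) = \bR^7$ is an associative 3-plane.

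Transporting this back through $u$, the tangent space $T_pA = u^{-1}(V_+)$ is an associative 3-plane in $T_pM$ at every $p \in A$, so $A$ is an associative 3-fold, as claimed.  The main obstacle is the purely algebraic step --- extracting the associative 3-plane from the abstract triple $(\tau, \bR^7, \varphi_0)$; everything else is general Riemannian geometry together with the defining properties of the $\G2$-structure.
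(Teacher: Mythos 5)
Your proof is correct and follows the standard argument --- the fixed locus of an isometric involution is totally geodesic with $T_pA = \ker(d\sigma_p - \tId)$, the differential $d\sigma_p$ is a nontrivial involution in $G_2$, and the $(+1)$-eigenspace of any such involution is an associative $3$-plane --- which is precisely the proof the paper delegates to Joyce (Prop.\ 10.8.1) without reproducing it. In effect you have supplied the details the paper omits, so the two approaches coincide.
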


For the details of the proof, see \cite{Joy1}. Note that the fixed
point set $A$ is a closed submanifold of $M$. This is because $A$
can be represented as a preimage of 0 under a continuous map,
$\sigma -Id$. This does not contradict our assumption that
$A$ can be open. The reason for this is when we thicken an open
manifold $A$ to obtain the $G_2$-manifold $M$, then $A$ (even if
it is open in $K$) will be always a closed submanifold of $M$.

\vspace{.1in}

Note that there is a similar construction for the coassociative
case \cite[Prop.10.8.5]{Joy1}.

\section{Cayley submanifolds of $\tSpin(7)$-manifolds}

\subsection{\boldmath$\tSpin(7)$-manifolds and the Cayley calibration\boldmath}\label{sec:spin7}

The octonians $\bO = \bR^8$ are equipped with a triple (and quadruple) cross
product.  This cross product defines a 4-form
$\Psi_0(u,v,w,z) = \langle u \times v \times w , z \rangle$.  Given linear
coordinates $x = (x^0 , x^1 , \ldots , x^7 )$ on $\bR^8$,
\begin{eqnarray*}
  \widehat{\Psi_0} & = & \dx^0 \wedge \phi_0 + \ast\phi_0 \\
  & = &
  \dx^{0123} + \dx^{4567} +
  \left( \dx^{01} + \dx^{23} \right) \wedge
  \left( \dx^{45} +\dx^{67} \right) \\ & &
  + \left( \dx^{02} + \dx^{31} \right) \wedge
  \left( \dx^{46} + \dx^{75} \right)
  + \left( \dx^{03} + \dx^{12} \right) \wedge \left( \dx^{74} + \dx^{65} \right) \, .
\end{eqnarray*}

The exceptional $\tSpin(7) \subset \tSO(8)$ is the subgroup preserving
$\Psi_0$ \cite{BrExHol,HL}.

\vspace{.1in}

A $\tSpin(7)$-{\it structure} on $M^8$ is a principle right $\tSpin(7)$-bundle
$\pi : P \to M$.  The elements of $P_x = \pi^{-1}(x)$ are linear isomorphisms
$u : T_x M \to \bR^8$, and the right action is given by
$u \cdot a = a^{-1} \circ u$.  The $\tSpin(7)$-structure induces a
well-defined 4-form $\Psi$ on $M$ via $\Psi_x = u^* \Psi_0$.  As in the $\G2$
case, M also admits a unique metric $g$ and volume form $\ast 1$ for which
$u: T_x M \to R^8$ is an oriented isometry.

\vspace{.1in}

We say $M$ is a $\tSpin(7)$-{\it manifold} when $\Psi$ is closed.
(Equivalently, $\Psi$ is parallel and the $\tSpin(7)$-structure is
torsion-free \cite{BrExHol}.)  In this case, $M$ is Ricci-flat
\cite[10.65]{Be}, and the metric is real analytic in harmonic
coordinates \cite[Th. 5.2]{DK}.  Since $\Psi$ is harmonic it
follows that $\Psi$ is real analytic as well.

\vspace{.1in}

Given a $\tSpin(7)$-manifold, the 4-form
$\Psi$ is a calibration, known as the {\it Cayley calibration}.  The 4-dimensional
submanifolds $i : X^4 \to M$ calibrated by $\Psi$ are the {\it Cayley
submanifolds}.  Cayley submanifolds are plentiful in $\tSpin(7)$-manifolds:
The Cartan-K\"ahler theory implies that given a Cayley plane $E^4 \subset
T_zM$, there exists a Cayley submanifold $X^4$ tangent to $E$.  Moreover,

\begin{lem}
\label{lem:cayley}
Every real-analytic 3-dimensional submanifold $Y^3$ of a $\tSpin(7)$-manifold
$(M^8,\Psi)$ lies in a unique Cayley $X^4$.
\end{lem}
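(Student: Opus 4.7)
The strategy mirrors that of Lemma \ref{lem:assoc}, replacing the $G_2$-equivariant vector-valued 3-form $\chi$ by a $\tSpin(7)$-equivariant vector-valued 4-form $\tau$ whose vanishing on a 4-plane encodes the Cayley condition. Following Harvey-Lawson \cite{HL}, in flat $(\bR^8,\Psi_0)$ there is a natural $\tSpin(7)$-equivariant 4-form $\tau_0$ valued in the 7-dimensional $\tSpin(7)$-representation (realized concretely as $\bR^7\cong\Lambda^2_7$), with the property that a 4-plane $E\subset\bR^8$ is Cayley if and only if $\tau_0|_E=0$. By $\tSpin(7)$-invariance, $\tau_0$ pulls back through the local $\tSpin(7)$-coframings on $(M,\Psi)$ to a well-defined $E_7$-valued 4-form $\tau$ on $M$, where $E_7\to M$ is the rank-$7$ bundle associated to $\bR^7\cong\Lambda^2_7$. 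Let $\cI$ be the ideal algebraically generated by the seven scalar 4-forms $\tau_a$, $a=1,\ldots,7$, obtained from any local trivialization of $E_7$.

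First I would show that $\cI$ is a well-defined differential ideal. Well-definedness is immediate from the $\tSpin(7)$-invariance of $\tau_0$. Closure under $\td$ is verified by the same Lie-derivative calculation as in Lemma \ref{lem:closed-assoc-ideal}, using the Cartan formula $\cL_X\alpha = X\lrcorner\,\td\alpha + \td(X\lrcorner\,\alpha)$, the fact that $\td\Psi=0$, and the observation that $\td\tau_a$ can be expressed as a wedge product of connection 1-forms with the $\tau_b$'s (a consequence of the $\tSpin(7)$-equivariance of $\tau$).

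Next I would verify the Cartan-K\"ahler hypotheses at $Y^3$. Since $\dim Y=3$ and $\cI$ is generated in degree 4, $Y$ is trivially an integral manifold, and every 3-plane in $T_yM$ is an integral element; hence $V_3(\cI)_y = \tGr(3,T_yM)$ is smooth near $T_yY$, so $T_yY$ is ordinary. The polar space is
\[
  H(T_yY) \ = \ \bigl\{\, v \in T_yM \ \big| \ \tau_a(v,e_1,e_2,e_3)=0, \ a=1,\ldots,7 \,\bigr\},
\]
where $\{e_1,e_2,e_3\}$ is a basis of $T_yY$. The key classical fact---the Cayley counterpart to the unique associative extension of a 2-plane in $\bR^7$---is that \emph{every $3$-plane in $(\bR^8,\Psi_0)$ is contained in a unique Cayley $4$-plane}; see Harvey-Lawson \cite[\S IV]{HL}. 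A dimension count corroborates this: $\tdim\tGr(3,8)=15=12+3 = \tdim\{\text{Cayley 4-planes}\} + \tdim\tGr(3,4)$. Consequently $H(T_yY)$ is precisely that unique Cayley 4-plane, $\tdim H(T_yY)=4$, and the extension rank $r(T_yY)=\tdim H(T_yY)-(3{+}1)=0$ is constant on $V_3(\cI)=\tGr(3,TM)$. Thus $T_yY$ is regular.

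The Cartan-K\"ahler theorem \cite[\S0.4]{BrClbEm} then yields a real analytic 4-dimensional integral manifold $X^4\supset Y$ of $\cI$, unique as a germ along $Y$ because the extension rank vanishes. By construction every tangent 4-plane of $X$ lies in $\{\tau=0\}$, so $X$ is the desired (unique) Cayley submanifold. The main technical point is the polar-space computation: everything else is a direct transcription of the associative case, and the crux is the Harvey-Lawson uniqueness of Cayley extensions of 3-planes.
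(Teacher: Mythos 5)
Your proposal is correct and follows essentially the same route as the paper: the same ideal generated by the seven components of the $\tSpin(7)$-equivariant 4-form $\tau$, the observation that a 3-fold is trivially an ordinary integral manifold since $\cI$ is generated in degree 4, the polar-space computation giving $\tdim H(T_yY)=4$ and extension rank zero, and then Cartan--K\"ahler. Two minor points of divergence, neither a gap: the paper establishes $\td\tau^j=\tau^k\wedge\theta^j_k$ from the fact that $\tau$ is parallel (the Cartan-formula argument of Lemma \ref{lem:closed-assoc-ideal} does not transfer literally, since $\tau^j$ is not a contraction $e_j\lrcorner$ of a closed form the way $\chi_j=-e_j\lrcorner\ast\varphi$ is, though your fallback statement via equivariance is exactly what the paper proves); and the paper identifies $H(T_yY)$ by an explicit computation in a $\tSpin(7)$-coframe after using transitivity on 3-planes, whereas you invoke the Harvey--Lawson uniqueness of the Cayley extension of a 3-plane---both yield the same conclusion.
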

\noindent
The flat case $(M,\Psi) = (\bR^8,\Psi_0)$ was proven by Harvey and Lawson
\cite[Th.4.3]{HL}.  The proof follows from Lemma \ref{lem:closed-cayley-ideal} below and the Cartan-K\"ahler Theorem \cite[\S0.4]{BrClbEm}, and is given at the end of
this section.

\vspace{.1in}

The 4-form satisfies the relation \cite[Ch.5, Th.1.28]{HL}
\begin{equation}\label{eqn:Cayley}
  \Psi_0(u,v,w,z)^2 + |\tIm(u \times v \times w \times z)|^2 =
  | u \wedge v \wedge w \wedge z |^2 \, .
\end{equation}
Let $\tau_0$ denote the vector-valued 4-form
$\tau_0(u,v,w,z) = \tIm( u , v , w, z)$.
Given a $\tSpin(7)$-manifold $(M^8,\Psi)$ and a submanifold
$i:X^4 \hookrightarrow M$,
notice that $i^*\Psi = \textrm{d} \textit{vol}_X$ if and only if
$i^*\tau = 0$, where $\tau$ is the vector valued 4-form
$\tau_x = u^* \tau_0$.  Consequently the Cayley submanifolds are the integral
submanifolds of $\tau = 0$.  Let
$\{ \w^0 \, , \, \ldots \, , \, \w^7 \}$ be a local $\tSpin(7)$-coframing.
That is, $\w^j_x = u_x^* \dx^j$, $j=0, \ldots, 7$, for smoothly varying
isometries $u_x : T_x M \to \bR^8$  in $P_x$.  Let $\{ e_j \}$ denote the dual
framing.  Write $\tau = \sum_1^7 \tau^j e_j$.  Then $X$ is Cayley if and only
if $i^*\tau^j = 0$, $j=1, \ldots, 7$.

\begin{lem}\label{lem:closed-cayley-ideal}
Let $\cI$ denote the ideal generated algebraically by the $\tau^j$.  Then
$\cI$ is well-defined and closed under exterior differentiation.
\end{lem}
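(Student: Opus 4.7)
The plan is to mirror the proof of Lemma \ref{lem:closed-assoc-ideal} for the associative case. The two claims split cleanly: well-definedness will follow from the $\tSpin(7)$-equivariance of the vector-valued form $\tau$, and differential closure will be extracted from the parallelism of $\Psi$ on a $\tSpin(7)$-manifold.

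For well-definedness, the vector-valued 4-form $\tau$ is built intrinsically from the $\tSpin(7)$-structure together with the quadruple cross product on $\bO$. Under a change of local $\tSpin(7)$-coframe $u \mapsto a\cdot u$ with $a\in \tSpin(7)$, the tuple $(\tau^1,\dots,\tau^7)$ transforms by the standard action of $\tSpin(7)$ on $\tIm(\bO)\cong\bR^7$. Consequently the $7$-dimensional subspace $\tspan\{\tau^1,\dots,\tau^7\}\subset\Lambda^4 T^*_xM$ is preserved, so the ideal $\cI$ it generates is independent of the choice of coframe.

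For differential closure, I would use that on a $\tSpin(7)$-manifold $\Psi$ is parallel, and hence so is the vector-valued 4-form $\tau$, since $\tau$ is algebraically assembled from $\Psi$ and the parallel octonion multiplication. In a local $\tSpin(7)$-coframe this parallelism yields a structure equation
\begin{equation*}
d\tau^j \;=\; \sum_{k=1}^{7}\theta^{j}_{\ k}\wedge\tau^k,
\end{equation*}
where the 1-forms $\theta^{j}_{\ k}$ arise from the Levi-Civita connection, reduced to a $\fspin(7)$-valued connection and then pushed to its action on the imaginary-octonion representation $\bR^7$. This manifestly places each $d\tau^j$ in $\cI$, proving closure.

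The main obstacle---and the reason a verbatim repetition of the Lie-derivative computation in Lemma \ref{lem:closed-assoc-ideal} is not entirely automatic---is that there is no equally transparent single-contraction formula expressing $\tau^j$ in terms of $\Psi$ alone (analogous to $\chi_j = -e_j\lrcorner\ast\varphi$), because $\tau$ records the imaginary part of the quadruple cross product while $\Psi$ records its real part. The representation-theoretic packaging above circumvents this: alternatively, one can expand $d\tau^j$ directly using the fact that the coefficients of $\tau^j$ in a $\tSpin(7)$-coframe are constants, then substitute the torsion-free first structure equation $d\w^k = -\theta^k_{\ \ell}\wedge\w^\ell$ with $\theta\in\fspin(7)\subset\fso(8)$, and finally invoke the invariance of $\tspan\{\tau_0^j\}\subset\Lambda^4\bR^{8*}$ under the infinitesimal $\fspin(7)$-action to reassemble the result as an element of $\cI$.
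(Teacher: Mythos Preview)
Your proposal is correct and follows essentially the same route as the paper: well-definedness via the $\tSpin(7)$-invariance of $\tau$, and differential closure via the parallelism of $\tau$ (deduced from that of $\Psi$) yielding $d\tau^j = \theta^j_{\ k}\wedge\tau^k$. The paper makes the last step explicit by writing $0=\nabla\tau = \nabla\tau^j\otimes e_j - \tau^j\otimes\theta^k_{\ j}e_k$, hence $\nabla\tau^j = \tau^k\otimes\theta^j_{\ k}$, and then skew-symmetrizing; your representation-theoretic phrasing and the alternative direct-expansion argument you sketch at the end are equivalent packagings of the same computation.
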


\begin{proof}
That $\cI$ is well-defined is a consequence of the $\tSpin(7)$
invariance of $\tau$.

\vspace{.1in}

To see that $\cI$ is closed, note that (\ref{eqn:Cayley}) and the
fact that $\Psi$ is parallel imply that $\tau$ is also parallel.
Hence
\begin{eqnarray*}
  0 & = &  \nabla\tau \ = \ \nabla \, (\tau^j \ot e_j)
   \ = \  \nabla \tau^j \ot e_j + \tau^j \ot \nabla e_j
   \ = \  \nabla \tau^j \ot e_j - \tau^j \ot \theta^k_j e_k \, \\
  \Longrightarrow \quad
  \nabla \tau^j & = & \tau^k \ot \theta^j_k \, .
\end{eqnarray*}
Above, $\theta$ is the $\mathfrak{spin}(7)$--valued connection
form.  Since the exterior derivative $\td \tau^j$ is the
skew-symmetrization of the covariant derivative $\nabla \tau^j$,
we have $\td \tau^j = \tau^k \wedge \theta^j_k$.
\end{proof}

\medskip

\noindent{\it Proof of Lemma \ref{lem:cayley}.} As in the case of
Lemma \ref{lem:assoc}, the proof is a straightforward application
of the Cartan-K\"ahler Theorem.  See \cite[\S0.4]{BrClbEm} for a review of integral elements, polar spaces and the Cartan-K\"ahler theory in the context.  As $\cI$ is generated by 4-forms,
and $Y$ is of dimension three, $Y$ is trivially an integral
manifold.  Similarly, $V_3(\cI) = \tGr_3(TM)$, and $T_yY$ is
ordinary, for all $y\in Y$.

\vspace{.1in}

In a $\tSpin(7)$ coframing the $\tau^j$ are given by
\begin{eqnarray*}
  \tau^1 & = & (\w^{03} - \w^{12}) \wedge (\w^{46}+\w^{57})
               - (\w^{02} + \w^{13}) \wedge (\w^{47}-\w^{56}) \\
  \tau^2 & = &  (\w^{01} - \w^{23}) \wedge (\w^{47}-\w^{56})
               - (\w^{03} - \w^{12}) \wedge (\w^{45}-\w^{67}) \\
  \tau^3 & = & (\w^{02} + \w^{13}) \wedge (\w^{45}-\w^{67})
               - (\w^{01} - \w^{23}) \wedge (\w^{46}+\w^{57}) \\
  \tau^4 & = & \w^{1234} - \w^{0235} + \w^{0136} - \w^{0127}
               +\w^{0567} -\w^{1467} +\w^{2457} -\w^{3456} \\
  \tau^5 & = & \w^{1235} +\w^{0234} +\w^{0137} +\w^{0126}
               -\w^{1567} -\w^{0467} -\w^{3457} -\w^{2456} \\
  \tau^6 & = & \w^{1236} +\w^{0237} -\w^{0134} -\w^{0125}
               -\w^{2567} -\w^{3467} +\w^{0457} +\w^{1456} \\
  \tau^7 & = & \w^{1237} -\w^{0236} -\w^{0135} +\w^{0124}
               -\w^{3567} +\w^{2467} +\w^{1457} -\w^{0456} \, .
\end{eqnarray*}
Cf. \cite[(6.10)]{McLe}.

\vspace{.1in}

Fix $y$.  Since $\tSpin(7)$ acts transitively on 3-planes
\cite[Th.4]{BrExHol}, we may assume that $T_yY = \tspan\{e_0 , e_1
, e_2 \}$. Then $H(T_y Y) = \tspan\{ e_0 , e_1 , e_2 , e_3 \}$,
and the extension rank is zero.  It follows from the Cartan-K\"ahler theorem that $Y$ lies in a unique Cayley
4-manifold.\hfill\qed

\subsection{\boldmath$\tSpin(7)$-involutions\unboldmath}\label{sec:spin7invol}

There are examples of Cayley submanifolds which are the fixed
point sets of $Spin(7)$ involutions, \cite[Prop.10.8.6 ]{Joy1}.

\vspace{.1in}

As in $G_2$ case, let $\sigma :M\rightarrow M$ be a nontrivial
isometric involution of a $Spin(7)$-manifold $(M,\Psi,g)$
satisfying $\sigma^*(g)=g$ and $\sigma^2=id$, but $\sigma\neq 1$.

\vspace{.1in}

\begin{lem}
\label{involution2} Let $(M,\Psi,g)$ be a $Spin(7)$-manifold and
let $\sigma:M\rightarrow M$ be a nontrivial isometric involution
preserving $\Psi$, i.e. $\sigma^*(\Psi)=\Psi$. Then each connected
component of the fixed point set $A=\{p\in M |\;\sigma(p)=p\}$ is
either a Cayley 4-fold in $M$ or a single point.
\end{lem}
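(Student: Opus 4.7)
The plan is to reduce the problem to a linear-algebraic classification of the non-identity involutions in $\tSpin(7)\subset\tSO(8)$ applied to the differential $d\sigma_p$ at each fixed point $p\in A$, and then assemble these local pictures globally.

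At any $p\in A$, $d\sigma_p$ is an involution of $T_pM$ preserving $g_p$ and $\Psi_p$, so with respect to any $\tSpin(7)$-frame $u\in P_p$ it corresponds to some $\sigma_0\in\tSpin(7)\subset\tSO(8)$ with $\sigma_0^2=\tId$. The nontriviality of $\sigma$, together with the classical fact that an isometry of a connected Riemannian manifold is determined by its $1$-jet at a single point, forces $\sigma_0\neq\tId$; indeed, if $d\sigma_p=\tId$ then $\sigma$ would be the identity on the connected component of $M$ containing $p$. Hence $T_pA$ is the $+1$-eigenspace $V_+$ of some non-identity involution $\sigma_0\in\tSpin(7)$.

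The central step is to show that $V_+$ is either $\{0\}$ or a Cayley $4$-plane. If $V_+=\{0\}$ then $\sigma_0=-\tId$ is the central element of $\tSpin(7)$, and $p$ is an isolated fixed point. Otherwise, choose a unit vector $v\in V_+$; since $\tSpin(7)$ acts transitively on $S^7$ with stabilizer $G_2\subset\tSO(\tIm\bO)$, we may conjugate so that $v=e_0$, in which case $\sigma_0$ becomes a non-identity involution of $G_2$. In the compact group $G_2$ there is a unique conjugacy class of non-identity involutions (the one with fixed subgroup $\tSO(4)$), whose $+1$-eigenspace on $\tIm\bO=e_0^\perp$ is the standard associative $3$-plane $W$ (with the $-1$-eigenspace being the complementary coassociative $4$-plane). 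Therefore $V_+=\bR e_0\oplus W$, and inspection of the block form $\Psi_0=\dx^0\wedge\phi_0+\ast\phi_0$ from \S\ref{sec:spin7} shows that $\Psi_0$ restricts to the volume form on $\bR e_0\oplus W$ precisely when $W$ is associative, so $V_+$ is Cayley.

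Globally, on each connected component $A_0\subset A$ the dimension of $T_pA_0$ is locally constant and takes values in $\{0,4\}$ by the analysis above. A zero-dimensional component is an isolated fixed point. A $4$-dimensional component is a smooth closed submanifold (fixed loci of smooth involutions are always smooth submanifolds, with tangent space the $+1$-eigenspace of $d\sigma$) whose every tangent plane is Cayley; hence $i^*\Psi=\tvol_{A_0}$ and $A_0$ is a Cayley $4$-fold. The main obstacle is the Lie-theoretic input: the transitivity of $\tSpin(7)$ on $S^7$ with stabilizer $G_2$ (already used in the proof of Lemma \ref{lem:cayley}) and the uniqueness of the non-identity conjugacy class of involutions in $G_2$; granting these, the identification $\bR v\oplus(\text{associative}) = \text{Cayley}$ is immediate from the explicit form of $\Psi_0$, and the rest is bookkeeping.
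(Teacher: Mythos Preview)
Your argument is correct and is essentially the standard proof (the one in Joyce's book, Proposition~10.8.6). Note, however, that the paper does not actually prove this lemma: after the statement it simply writes ``For the details of the proof, see \cite{Joy1}.'' So there is no in-paper proof to compare against; you have supplied what the authors deferred to the reference.

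Two small remarks on your write-up. First, your clause ``$\sigma$ would be the identity on the connected component of $M$ containing $p$'' implicitly needs $M$ connected for the lemma's dichotomy to hold as stated; otherwise an entire component of $M$ could sit in the fixed locus. This is a quirk of the lemma's hypotheses rather than a flaw in your reasoning, and Joyce's statement carries the same tacit assumption. Second, the Lie-theoretic inputs you flag (transitivity of $\tSpin(7)$ on $S^7$ with stabilizer $G_2$, and the uniqueness of the nontrivial involution class in $G_2$) are indeed the substantive facts; both are standard, and the first is already invoked elsewhere in the paper.
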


For the details of the proof, see \cite{Joy1}.

\vspace{.1in}

\section{$G$-structures and ideals}\label{sec:Gstr}
The primary purpose of this section is to introduce the principle
objects of interest, and establish notation.  Detailed discussions
may be found in \cite[Ch.II]{KN1} and \cite[\S1]{BrExHol}.
\subsection{\boldmath $G$-structures\unboldmath}
Given a smooth manifold $M$ of dimension $n$, let $\pi : \cF \to
M$ denote its bundle of $\bR^n$--valued coframes.  The fibre
$\pi^{-1}(x) =: \cF_x$ over $x\in M$ is the collection of linear
isomorphisms $u : T_xM \to \bR^n$.  This is a principle right
$\tGL(n)$--bundle with action of $a \in \tGL(n)$ given by $u \cdot
a := a^{-1} \circ u$.  Given a subgroup $G \subset \tGL(n)$, a
{\it $G$-structure} is a principle sub-bundle $\cP \subset \cF$
with structure group $G$.

\vspace{.1in}

\noindent{\it Example.}  When $G = \tSO_n$, there is a unique
Riemannian metric on $M$ for which $\cP$ is the bundle of
(oriented) orthonormal coframes.

\vspace{.1in}

\noindent In the case that $G \subset \tSO(n)$, let $\overline\cP
:= \cP \cdot \tSO(n)$ be the $\tSO(n)$-bundle of orthonormal
coframes.  The corresponding Riemannian metric on $M$ is the {\it
underlying metric of the $G$-structure}. \smallskip
\subsection{Flat structures}
Given a coordinate neighborhood $x : U \to \bR^n$ on $M$, notice
that $\dx$ is a local section $U \to \cF$.  We say the
$G$-structure $\cP$ is {\it flat} when $M$ every $p \in M$ admits
a coordinate chart such that $\dx$ is a local section of $\cP$.

\vspace{.1in}

Clearly, $\cF$ is a flat $\tGL(n)$-structure.  Every orientable
$M$ admits a $\tSL(n)$-structure, given by the volume form.
(Alternatively, every $\tSL(n)$-structure on $M$ uniquely
determines a volume form.)  Because the volume form may always be
expressed locally as $\dx^1 \wedge \cdots \wedge \dx^n$ in some
local coordinate system, every $\tSL(n)$-structure is flat.
\subsection{Connections}
Given a $G$-structure $\pi : \cP \to M$, a tangent vector $v \in
T_u \cP$ is {\it vertical} if $\pi_*(v) = 0$.  A differential
$p$-form $\Omega$ on $\cP$ is {\it semi-basic} if $v \lrcorner\,
\Omega = 0$ for all vertical $v$.  There is a canonically defined,
$\bR^n$--valued semi-basic 1-form $\eta$ on $\cP$:  given $v \in
T_u \cP$,
\begin{displaymath}
  \eta(v) \ := \ u \circ \pi_* (v) \, .
\end{displaymath}

\vspace{.1in}

 The components of $\eta = (\eta^1 , \ldots ,
\eta^n)$ give a basis of the semi-basic 1-forms on $\cP$.

\vspace{.1in}

Let $V_u := \tker \, \pi_* \subset T_u \cP$ denote the vertical
subspace at $u \in \cP$.  A {\it connection} on $\cP$ is a smooth
distribution $H_u \subset T_u \cP$ that is complimentary to $V_u$
and invariant under the right action of $G$.  Equivalently, $\pi_*
: H_u \to T_x M$ is an isomorphism, $x = \pi(u)$; and $(R_a)_* H_u
= H_{u\cdot a}$, where $R_a : \cP \to \cP$ is the map $u \mapsto u
\cdot a$.

\vspace{.1in}

The connection $H$ determines a $\fg$--valued {\it connection
1-form} $\theta$ satisfying $(R_a)^* \theta = \tad(a^{-1}) \theta$
as follows.  Setting $\Theta_{|H_u} \equiv 0$, it remains to
specify $\theta$ on $V_u$.  Every $X \in \fg$ determines a
vertical vector field $X^*$ on $\cP$: given $a(t) \in G$ with
$a(0) = \tId$ and $a'(0) = X$, define $X^*_u = \frac{\td}{\td t} u
\cdot a(t) |_{t=0}$.  These $X^*_u$ span $V_u$, and defining
$\theta(X^*) = X$ determines $\theta$.  Clearly the connection
form is $\fg$--valued.  We leave it to the reader to confirm that
$(R_a)^* \theta = \tad(a^{-1}) \theta$.  Conversely, any
$\fg$--valued 1-form satisfying this condition determines a
connection $H$ via the assignment $H_u := \{ \theta_u = 0 \}$.
\subsection{Torsion}\label{sec:torsion}
It can be shown that a $\fg$--valued 1-form $\theta$ is a
connection form if and only if $\td\eta^j = -\theta^j_k \wedge
\eta^k + T^j_{k\ell} \eta^k \wedge \eta^\ell$, with $T^j_{k\ell} +
T^j_{\ell k} = 0$, \cite[Prop. 8.3.3]{IL}.  The functions
$T^j_{k\ell}$ define a map $T := T^j_{k\ell}
\frac{\partial}{\partial x^j} \ot \dx^k \wedge \dx^\ell : \cP \to
\bR^n \ot \Lambda^2 (\bR^n)^*$, called the {\it torsion of
$\theta$}.  Any other connection 1-form $\tilde \theta$ differs
from $\theta$ by a $\fg$-valued semi-basic 1-form,
$\tilde\theta^j_k = \theta^j_k + c^j_{k\ell} \eta^\ell$.  The
corresponding change in torsion is $\tilde T^j_{k\ell} -
T^j_{k\ell} = c^j_{k\ell} - c^j_{\ell k}$.  In particular, $\tilde
T - T$ takes values in the image of the skew-symmetrizing map
$\delta : \fg \ot (\bR^n)^* \subset \bR^n \ot (\bR^n)^* \ot
(\bR^n)^* \to \bR^n \ot \Lambda^2 (\bR^n)^*$.  This leads to the
definition of the {\it torsion of the $G$-structure $\cP$} as $[T]
: \cP \to h^0(\fg) := (\bR^n \ot \Lambda^2 (\bR^n)^*)/\delta(\fg
\ot (\bR^n)^*)$.

\vspace{.1in}

\noindent{\it Example.} When $\fg = \fso(n)$, it is easy to show
that $h^0(\fso(n)) = \{0\}$.  This is equivalent to the existence
of a torsion-free $g$--compatible connection on a Riemannian
manifold $(M,g)$.  (This is the existence-half of the fundamental
theorem of Riemannian geometry.  The uniqueness-half is equivalent
to $\fso(n)^{(1)} := \tker \, \delta = (\fso(n) \ot (\bR^n)^*)
\cap ( \bR^n \ot S^2 (\bR^n)^*) = \{0\}$.  In general, $\fg^{(1)}$
records the changes in connection that preserve torsion.)  When
$\fg \subset \fso(n)$, we have $h^0(\fg) = ( \fso(n) / \fg ) \ot
(\bR^n)^*$.

\vspace{.1in}

\subsection{Torsion-free
\boldmath $G$-structures\unboldmath}
We say that $\cP$ is {\it torsion-free} when $[T] \equiv 0$, and
$M$ is a {\it $G$-manifold}.  In the case that $G \subset
\tSO(n)$,   let $i : \cP \to \overline \cP$ denote the inclusion,
and $\overline H$ the Levi-Civita connection on $\overline\cP$.
It is not difficult to see that $\cP$ is torsion-free if and only
if $\overline H \subset i_* T \cP$.  Equivalently, $\cP$ is
preserved under parallel transport in $\overline\cP$.

\vspace{.1in}

Torsion may be viewed as a first-order obstruction to flatness.
Here is one way to see this.  Suppose $M$ carries a $G$-structure
$\cP$.  Let $x : U \to \bR^n$ be a coordinate system about $z \in
M$.  We may assume that the local section $\dx : U \to \cF$
satisfies $\dx_{z} \in \cP_{z}$. The coordinates define a local,
flat $G$-structure $\cP_0 := \dx\cdot G$ over $U$.  The following
lemma is well-known.

\begin{lem}\label{lem:torsion-free}
The $G$-structure $\cP$ is torsion-free if and only if for all $z
\in M$, there exist local coordinates $(x,U)$ such that $\cP$ and
$\cP_0$ are tangent at $\dx_z$.
\end{lem}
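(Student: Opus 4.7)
The plan is to show that the tangency condition at $z$ is really a condition on a 2-jet of coordinates at $z$, and that its solvability is governed by the value of the torsion class $[T]$ at $\dx_z$; both implications of the lemma then fall out. I expect the main obstacle to be the bookkeeping needed to identify the algebraic obstruction I will extract with the torsion class $[T]$ defined in \S\ref{sec:torsion}.

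For the reduction, fix $z \in M$ and coordinates $(x,U)$ with $u_0 := \dx_z \in \cP_z$. Since $\cP$ and $\cP_0 = \dx\cdot G$ are principal $G$-subbundles of $\cF$ through $u_0$ of equal dimension sharing the vertical tangent space $V_{u_0}$, they are tangent at $u_0$ if and only if $\dx_\ast(T_zM) \subset T_{u_0}\cP$. Choose any local section $s : U \to \cP$ with $s(z) = u_0$ and write $\dx = s\cdot h$ for the unique smooth $h : U \to \tGL(n)$ with $h(z) = e$. A direct product-rule computation at $z$ gives
$$ \dx_\ast(v) \ = \ s_\ast(v) \,+\, \bigl(\td h_z(v)\bigr)^\ast_{u_0}, \quad v \in T_zM, $$
where $(\cdot)^\ast$ denotes the fundamental vertical vector field. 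Since $s_\ast(v) \in T_{u_0}\cP$ while the $\cP$-vertical subspace at $u_0$ is generated by $\fg$, tangency at $u_0$ is equivalent to $\td h_z \in \fg \ot (\bR^n)^\ast$.

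Next I track how $\td h_z$ changes under the remaining freedom in choices. Replacing $s$ by $s\cdot k^{-1}$ with $k : U \to G$, $k(z) = e$, shifts $\td h_z$ by $\td k_z \in \fg \ot (\bR^n)^\ast$. Replacing $x$ by $\tilde x = f(x)$ with $f(z)=0$ and Jacobian $J(z) = \tId$ changes $h$ to $h\cdot J^{-1}$ and hence shifts $\td h_z$ by $-\td J_z$; equality of mixed partials places $\td J_z \in \bR^n \ot S^2(\bR^n)^\ast$ inside $\fgl(n)\ot(\bR^n)^\ast$. Therefore the residue class of $\td h_z$ in
$$ Q \ := \ \bigl(\fgl(n)\ot (\bR^n)^\ast\bigr) \,\Big/\, \bigl(\fg\ot (\bR^n)^\ast \,+\, \bR^n \ot S^2(\bR^n)^\ast\bigr) $$
depends only on the germ of $\cP$ at $u_0$, and tangency at $u_0$ can be arranged by an appropriate choice of coordinates exactly when this class vanishes.

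The final step identifies $Q$ with $h^0(\fg)$ and $[\td h_z]$ with $[T](u_0)$. The decomposition $(\bR^n)^{\ast\,\ot 2} = S^2 \op \Lambda^2$ immediately yields $Q \cong (\bR^n \ot \Lambda^2(\bR^n)^\ast)/\delta(\fg \ot (\bR^n)^\ast) = h^0(\fg)$. For the identification of classes, pull back the structure equation $\td\eta^j = -\theta^j_k\wedge\eta^k + T^j_{k\ell}\,\eta^k\wedge\eta^\ell$ along $s$: writing $s^\ast \theta^j_k = \Gamma^j_{k\ell}\,\td x^\ell$, the Christoffel coefficients $\Gamma^j_{k\ell}$ encode $\td h_z$, and the relation $T^j_{k\ell}\!\circ\! s = \tfrac12(\Gamma^j_{\ell k}-\Gamma^j_{k\ell})$ recorded in \S\ref{sec:torsion} identifies the skew part of $[\td h_z]$ with the torsion function value at $u_0$. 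This matching is the main bookkeeping step of the proof. With it in hand, $\cP$ is torsion-free iff $[T](u_0) = 0$ for every $u_0 \in \cP$, iff coordinates satisfying the tangency condition at $\dx_z$ exist near every $z \in M$, which is the lemma.
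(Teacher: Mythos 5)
The paper does not actually prove this lemma---it is stated as ``well-known'' with no argument given---so there is no proof of record to compare yours against. Judged on its own, your proof is essentially correct and follows the standard route: reduce tangency at $\dx_z$ to the condition $\td h_z \in \fg\ot(\bR^n)^*$, compute the ambiguity of $\td h_z$ under change of $\cP$-section (a shift by $\fg\ot(\bR^n)^*$) and under change of coordinates fixing the $1$-jet at $z$ (a shift by $\bR^n\ot S^2(\bR^n)^*$, by symmetry of second partials), and identify the resulting quotient with $h^0(\fg)$. (One wording slip: $\cP$ and $\cP_0$ share the $G$-vertical subspace $\fg_{u_0}$, not the full vertical space $V_{u_0}$ of $\cF$; the reduction to $\dx_*(T_zM)\subset T_{u_0}\cP$ is still valid.)

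Two points in the last step need repair. First, the identity you quote, $T^j_{k\ell}\circ s = \tfrac12(\Gamma^j_{\ell k}-\Gamma^j_{k\ell})$, is not recorded in \S\ref{sec:torsion} and is not correct in your setting: since $s^*\eta^j = h^j_k\,\dx^k$, pulling back the structure equation and evaluating at $z$ (where $h=\tId$) gives $\td h^j_k\wedge \dx^k = -\Gamma^j_{k\ell}\,\dx^\ell\wedge\dx^k + T^j_{k\ell}\,\dx^k\wedge\dx^\ell$, i.e.\ $2\,T^j_{k\ell}(u_0) = (H^j_{\ell k}-H^j_{k\ell}) + (\Gamma^j_{\ell k}-\Gamma^j_{k\ell})$ where $(\td h^j_k)_z = H^j_{k\ell}\,\dx^\ell$. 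Your stated identity omits the $H$-term, which is precisely the quantity you are trying to relate to $T$. The conclusion nevertheless survives, because the $\Gamma$-contribution lies in $\delta(\fg\ot(\bR^n)^*)$, so $[T](u_0)$ equals minus the class of the skew part of $\td h_z$ in $h^0(\fg)$; this is the corrected form of your ``main bookkeeping step.'' Second, torsion-freeness means $[T]\equiv 0$ on all of $\cP$, whereas your criterion tests $[T]$ only at the single point $u_0=\dx_z$ of each fibre; you should add the (easy) observation that $[T]$ is $G$-equivariant, so its vanishing at one point of a fibre forces its vanishing on the whole fibre. With these two repairs the argument is complete.
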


\subsection{\boldmath $G$-structures \unboldmath
as sections and 1-flatness}\label{sec:1-flat}
Let $S = \cF/G$, and consider the bundle $\overline\pi : S \to M$,
\begin{center}
\setlength{\unitlength}{1cm}
\begin{picture}(2,3)
  \put(0.3,2.7){$\cF$}
  \put(0.4,2.5){\vector(0,-1){2}}
  \put(0.2,0){$M$}
  \put(0.8,2.8){\vector(1,-1){1}}
  \put(1.9,1.45){$S$}
  \put(1.8,1.3){\vector(-1,-1){1}}
  \put(0,1.5){$\pi$}
  \put(1.4,0.6){$\overline \pi$}
  \put(1.4,2.4){$\rho$}
\end{picture}
\end{center}
Notice that $G$-structures on $M$ are in one-to-one correspondence
with $\overline\pi$-sections $\sigma : M \to S$.

\vspace{.1in}

We say $\sigma : M \to S$ is {\it flat} when the corresponding
$G$-structure is flat.  The section $\sigma$ is {\it 1-flat} if it
is flat to first-order at every point.  That is, if every $x\in M$
admits an open neighborhood $U$ carrying a flat $G$-structure with
corresponding section $\sigma_0 : U \to S$ such that $\sigma(x) =
\sigma_0(x)$, and $\sigma(M)$ and $\sigma_0(U)$ are tangent at
$\sigma(x)$.  By Lemma \ref{lem:torsion-free}, {\it $\cP$ is
torsion-free if and only if $\sigma$ is 1-flat}.

\subsection{Admissible groups}
Given $G \subset \tSO(n)$, let $\Lambda^*(\bR^n)^G$ denote the
$G$-invariant constant coefficient differential forms on $\bR^n$.
We say $G$ is {\it admissible} if it is the subgroup of $\tGL(n)$
fixing all the forms in $\Lambda^*(\bR^n)^G$.  (A priori, this subgroup contains $G$.)  In \cite{BrExHol}
Bryant showed that $G_2$ and $\tSpin(7)$ are admissible.  The ring
$\Lambda^*(\bR^n)^{\G2}$ is generated by $\varphi_0$ and
$\ast\varphi_0$ (cf. \S\ref{sec:G2}), and
$\Lambda^*(\bR^n)^{\tSpin(7)}$ by $\Psi_0$ (cf.
\S\ref{sec:spin7}).
\subsection{A differential ideal on
\boldmath $S$ \unboldmath}\label{sec:ideal}
This subsection and the following borrow heavily from
\cite{BrClbEm}.

\vspace{.1in}

 Every $p$-form $\alpha$ on $\bR^n$ defines a
semi-basic $p$-form $\hat\a$ on $\cF$ via
\begin{displaymath}
\hat \alpha_u ( v_1 , \ldots , v_p ) \ := \
  \alpha ( \eta(v_1) , \ldots , \eta(v_p) ) \, .
\end{displaymath}

\vspace{.1in}

When $\a$ is $G$-invariant, $\hat\a$ is invariant under the
right-action of $G$ on $\cF$ and therefore descends to a
well-defined $p$-form on $S$, also denoted by $\hat\a$.  Given a
$G$-structure $\cP$ with corresponding section $\sigma : M \to S$,
the pull-back $\sigma^* \hat\a$ defines a $p$-form $\a_\sigma$ on
$M$.  Recall that $\sigma$ is torsion-free if and only if the
$G$-structure is preserved under parallel transport by the
underlying Levi-Civita connection.  In particular, $\a_\sigma$ is
parallel, and therefore closed, if $\sigma$ is torsion-free.
Whence $\sigma^*(\td \hat\a) = 0$.

\vspace{.1in}

We denote by $\cI$ both the ideal on $\cF$ and the ideal on $S$
that is generated algebraically by $\td \hat\a$, $\a \in
\Lambda^*\bR^n$.  Graphs of torsion-free $\sigma : M \to S$ are
necessarily integral manifolds of $\cI$. The converse need not
hold; see \cite[\S0.5.5]{BrClbEm} for an example.
\subsection{Strong admissibility}\label{sec:str_adm}
Given $k \le n$, let $V(\cI,\overline\pi) \subset \tGr_k(TS)$
denote the $k$-dimensional integral elements $E \subset T_sS$ that
are $\overline\pi$-transverse; that is, the projection
$\overline\pi_* : E \to T_{\overline\pi(s)} M$ is injective.  As
noted above, $V_n(\cI,\overline\pi)$ contains the set of
$n$-planes tangent to the graph of a torsion-free section
$\sigma$.  When $G$ is admissible, and $V_n(\cI,\overline\pi)$
consists of exactly these tangent planes, then we say $G$ is {\it
strongly admissible}. As a result any section $\sigma:M\to S$ whose
image in $S$ is an integral manifold of $\cI$ is necessarily
torsion-free.  Both $G_2$ and $\tSpin(7)$ are strongly admissible
\cite{BrExHol}.

\vspace{.1in}

Recall from \S\ref{sec:torsion} that the torsion of
a $G$-structure $\sigma : M \to S$ lives in $\fh^0(\fg)$.  Since
$V_n(\cI,\overline\pi)$ contains the tangent planes to torsion-free
$\sigma(M)$, and torsion is a first-order invariant, we must have
\begin{displaymath}
  \tcodim( V_n(\cI,\pi) , \tGr_n(T\cF) ) \le \tdim\, \fh^0(\fg) \, ,
\end{displaymath}
with equality precisely when $G$ is strongly admissible.

\subsection{Integral elements on \boldmath $S$ and $\cF$\boldmath}
\label{sec:int_elem}
Define $V_k(\cI,\pi) \subset \tGr_k(T_u \cF)$ to be the
$k$-dimensional integral elements of $\cI$ that are
$\pi$-transverse.  Observe that $\rho_* : V_k(\cI,\pi) \to
V_k(\cI,\overline\pi)$ is a surjection. Given $E \, , \ E' \in
V_k(\cI,\pi)$, we have $\rho_*(E) = \rho_*(E')$ if and only if $E
\equiv E'$ mod $\fg_u$. Here $\fg_u := \tker ( \rho_{*|T_u\cF} )$.
(Alternatively, $\fg_u \subset V_u$ is the vertical subspace of
$T_u \cF$ identified with $\fg$ under the right action of $G$ at
$u\in\cF$.)  In particular, for fixed $E$ the set of all such $E'$
is naturally identified with $\textrm{Hom}(E , \fg_u)$.  This set
is of dimension $k\, \tdim(G)$.  As $\tdim \, T\cF - \tdim \, TS =
n\, \tdim(G)$, we have
\begin{equation}\label{eqn:codim}
  \tcodim( V_n(\cI,\overline\pi) \, , \, \tGr_n(TS) ) \ = \
  \tcodim( V_n(\cI,\pi) \, , \, \tGr_n(T\cF) ) \,
\end{equation}
in the case that $k=n$.

\vspace{.1in}

It is straightforward to check that, given $E \in V_k(\cI,\pi)$,
the polar spaces satisfy $H(E) \ = \
(\rho_*)^{-1} H( \rho_* E )$.  In particular, given an integral
flag $F = \{E_j\}_{j=0}^n$ in $T_u\cF$ and the  and the
corresponding flag $\overline F = \{ \overline E_j = \rho_* E_j
\}_{j=0}^n$ in $T_{\rho(u)}S$, we have
\begin{equation}\label{eqn:c}
  c_j(F) \ = \ c_j(\overline F) \qquad \textrm{and} \qquad
  c(F) \ = \ c(\overline F) \, .
\end{equation}

Finally, it is not difficult to describe the set $V_n(\cI,\pi)
\subset \tGr_n(T\cF)$. Given a $\pi$-transverse $E \in
\tGr_n(T\cF)$ the canonical 1-forms $\eta^j$ span $E^*$.  In
particular, when restricted to $E$ the connection 1-forms
$\theta^j_k$ may be expressed as linear combinations of the
$\eta^j$, $\theta^j_k = p^j_{k\ell} \eta^\ell$. The $p^j_{k\ell}$
are functions on $\cF$, and $p^j_{k\ell}(u)$ parameterizes the
open set of $\pi$-transverse $n$-planes $E \in \tGr_n(T_u\cF)$.
Now, given $\a = a_I \dx^I \in \Lambda^p(\bR^n)^*$, $I = \{ i_1 \,
, \ \cdots \, , \ i_p \}$ a multi-index, we have  $\hat\a = a_I
\eta^I$.  When restricted to $E \in \tGr_n(T\cF,\pi)$, $\td\eta^j
= -\theta^j_k \wedge \eta^k = p^j_{k\ell} \eta^k \wedge
\eta^\ell$.  We see that the equation $\td\hat\alpha = 0$ is a set
of linear conditions on the $p^j_{k\ell}$, and that $V_n(\cI,\pi)$
is a submanifold of $\tGr_n(T\cF)$. (The exterior differential
system $\cI$ with independence condition $\eta^1 \wedge \cdots
\wedge \eta^n \not= 0$ is {\it in linear form}.) In particular,
each $E \in V_n(\cI,\pi)$ is ordinary.

\subsection{Canonical flags and regular presentations}\label{sec:flags}

To each $n$-dimensional integral element $E_n \in V_n(\cI,\pi)$ at
$u \in\cF$ we may canonically associate a flag $F = \{ E_0 \subset
E_1 \subset \cdots \subset E_n \}$ by
\begin{displaymath}
  E_k \ := \ \{ v \in E_n \ | \ \eta^j(v) = 0 \ \forall \, j > k \} \, .
\end{displaymath}

\vspace{.1in}

The polar spaces are $H(E_k) = E_n + (\fh_k)_u$,
where the $\fh_k$ are defined as follows.  Let $i_k: \bR^k
\hookrightarrow \bR^n$ denote the natural inclusion, and set
\begin{displaymath}
  \fh_k \ := \ \{ x \in \fgl(n) \ | \ i_k{}^*(x . \alpha) = 0 \, , \ \forall
    \alpha \in \Lambda^*(\bR^n)^G \} \, .
\end{displaymath}

\vspace{.1in}

Note that $\fh_k$ contains $\fg$, $\fh_{k+1}$ and the space
$M_{n,k}\bR$ of $n$-by-$n$ matrices whose first $k$ columns are
zero. When $G$ is admissible, $\fh_n = \fg$.

 \vspace{.1in}

Cartan's Test implies that
\begin{displaymath}
  \sum_{j=0}^{n-1} c_j \ = \
  \sum_{j=0}^{n-1} \tcodim(\fh_j , \fgl(n)) \ \le \
  \tcodim( V_n(\cI,\pi) , \tGr_n(T\cF) ) \, .
\end{displaymath}

\vspace{.1in}

A strongly admissible group $G$ is {\it regularly presented} when
equality holds.  We will see that both $G_2$ and $\tSpin(7)$ are
regularly presented. When $G$ is regularly presented every $E \in
V_n(\cI,\pi)$ is the terminus of a regular flag $F$.  It follows
from (\ref{eqn:codim}, \ref{eqn:c}) and Cartan's Test that every
$\overline E_n \in V_n(\cI,\overline\pi)$ is also the terminus of
a regular flag, $\{ \overline E_j := \rho_*(E_j)\}$.

\vspace{.1in}


\section{Real algebraic and real analytic structures}

In this section we briefly review the basics of real algebraic
sets. For more information see \cite{AK1,AK2,AK3}.

\vspace{.1in}

A real algebraic set is the set of solutions of polynomial
equations in real variables, a set $V$ of the form
$V(I)=\{x\in \mathbb{R}^n \ | \ p(x)=0$, for all $p\in I\}$ where $I$ is
a set of polynomial functions $p : \mathbb{R}^n\rightarrow
\mathbb{R}$.

\vspace{.1in}

A point $x$ in an algebraic set $V\subset \mathbb{R}^n$ is called
{\it nonsingular of codimension $k$ in $V$} if there are polynomials
$p_i$, $i\in\{1,\ldots,k\}$, and a neighborhood $U\subset\bR^n$
of $x$ so that $p_i(V)=0$ and

\begin{itemize}
\setlength{\parsep}{0pt} \setlength{\itemsep}{0pt} \item[{\rm(i)}]
$\ V\cap U=U\cap\bigcap_{i=1}^{k}p_i^{-1}(0)$ \item[{\rm(ii)}] the
gradients $\nabla p_i$, are linearly independent on $U$.
\end{itemize}

\vspace{.1in}

Define $\tdim V$ to be the maximum of $n-k$ over all nonsingular
$x \in V$. Then for an algebraic set $V$,
$$\mathit{Nonsing}(V):=\{x\in V \ | \ x \textrm{ is nonsingular of
dimension } \tdim V \}$$
 and $\mathit{Sing}(V) := V \backslash
\mathit{Nonsing}(V)$. We say that an algebraic set $V$ is {\it
nonsingular} if $\mathit{Sing}(V)=\emptyset$.

\vspace{.1in}

Nash \cite{N} proved that every smooth, closed manifold is a
topological component of a nonsingular algebraic set, and
conjectured that every smooth, closed manifold is a nonsingular
algebraic set. Tognoli verified the Nash's conjecture.

\begin{thm}[Nash-Tognoli \cite{N,T,AK3}]\label{thm:tognoli}
Let $M$ be a smooth, closed manifold.  Then there exists a
nonsigular algebraic set $V$ and a diffeomorphism $\phi : M \to
V$.
\end{thm}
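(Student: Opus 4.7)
The plan is to follow the classical two-stage approach of Nash and Tognoli. Stage one (Nash) realizes $M$ as a connected component of a nonsingular real algebraic set in some $\bR^N$; stage two (Tognoli) eliminates the extra components.

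\smallskip

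For Nash's step, fix (via Whitney's embedding theorem) a smooth embedding $M \hookrightarrow \bR^N$ for $N$ sufficiently large, and choose a tubular neighborhood $U \supset M$ together with smooth functions $f_1, \ldots, f_{N-n}$ defined on a neighborhood of $\overline{U}$ whose common zero locus is exactly $M$ and whose gradients are linearly independent along $M$ (these are built from a trivialization of the normal bundle together with the tubular-neighborhood retraction). On any compact ball containing $\overline{U}$, apply the Stone-Weierstrass theorem to approximate each $f_i$ in the $C^1$-norm by a polynomial $p_i$. If the approximation is sufficiently close, the algebraic set $V' := \{ p_1 = \cdots = p_{N-n} = 0 \}$ is nonsingular in a neighborhood of $M$, and a standard implicit-function-theorem argument yields a connected component $V_0$ of $V'$ diffeomorphic to $M$ via a diffeomorphism close to the inclusion.

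\smallskip

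The main obstacle is Tognoli's step: upgrading \emph{``$M$ is a component of a nonsingular algebraic set''} to \emph{``$M$ is a nonsingular algebraic set.''} The extra components of $V'$ cannot be removed simply by adjoining more polynomial equations, since polynomials are global on $\bR^N$ and additional equations would generically cut $V_0$ as well. The resolution requires genuine algebraic topology: one invokes Thom's theorem that every unoriented cobordism class in $\mathfrak{N}_*$ admits a nonsingular real algebraic representative, produces an algebraic cobordism between $V'$ and an ``ideal'' algebraic model of $M$, and then cancels the auxiliary components via algebraic surgery inside some higher-dimensional $\bR^{N'}$, invoking Hironaka-type real algebraic resolution of singularities to keep the result nonsingular. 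After projecting back, one obtains a nonsingular real algebraic $V$ and a diffeomorphism $\phi : M \to V$. This cancellation step is the substantive content of Tognoli \cite{T} and, in the form used throughout this paper, of the resolution-tower machinery of Akbulut-King \cite{AK3}.
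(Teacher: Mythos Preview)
The paper does not prove this theorem. It is stated with attribution to Nash \cite{N}, Tognoli \cite{T}, and Akbulut--King \cite{AK3}, and is used as a black-box input (to obtain Lemma~\ref{lem:analyticstr} and Theorem~\ref{thm:altboch}); no argument for it is given or sketched anywhere in the paper. So there is nothing here to compare your proposal against.

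As an independent sketch of the Nash--Tognoli result, your outline is in the right spirit but has soft spots. In the Nash step, writing $M$ as the common zero set of $N-n$ functions with independent gradients along $M$ presupposes that the normal bundle of $M$ in $\bR^N$ is trivial; this is not automatic, and Nash's actual argument proceeds differently (e.g.\ via the tubular retraction and approximation of the identity section, or by first stabilizing the embedding). In the Tognoli step, your description (``algebraic cobordism,'' ``algebraic surgery,'' ``Hironaka-type resolution'') is more impressionistic than the actual argument: Tognoli's key input is indeed Thom's theorem that every unoriented cobordism class has a nonsingular algebraic representative, but the passage from ``component of an algebraic set'' to ``algebraic set'' is carried out by a specific approximation-plus-cobordism trick rather than by surgery or resolution of singularities. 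If you intend this as more than a pointer to the literature, those steps would need to be made precise.
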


In \cite{AK1, AK2, AK3}, Akbulut and King generalized Nash's
theorem to interiors of compact manifolds and proved that the
interior of a smooth, compact manifold $M$ is diffeomorphic to a
nonsingular real algebraic set $V$ which is properly imbedded in
$\mathbb{R}^n$ for some $n$. This established a one-to-one
correspondence between interiors of compact smooth manifolds and
nonsingular real algebraic sets.

\vspace{.1in}

Note that, one can use Akbulut-King's result and the Real Analytic
Implicit Function Theorem, Theorem \ref{thm:implicit}, to find
real analytic metrics on interiors of compact manifolds. In this
paper, we won't use this fact as we first double our manifold to
obtain a closed manifold, construct the real analytic metric and
then take its restriction.

\vspace{.1in}

In \cite{Bochner}, Bochner proved that on a closed, real analytic
manifold the real analytic differential forms are dense in the
smooth forms in the uniform topology.  Next, we show that one can
obtain Bochner's result using real algebraic theory developed by
Akbulut and King.

\begin{thm} \label{thm:altboch} Every smooth, closed manifold $X$ can be made a nonsingular real
algebraic variety $V$.  Every smooth differential form on $V$ can be
approximated by a real analytic differential form.
\end{thm}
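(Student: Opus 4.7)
The first assertion is immediate from Theorem \ref{thm:tognoli}: Nash-Tognoli supplies a diffeomorphism $\phi : X \to V$ with $V \subset \bR^n$ a nonsingular real algebraic set, and $V$ is compact since $X$ is. Transporting forms via $\phi$, the approximation statement reduces to showing that every smooth $p$-form on the compact nonsingular real algebraic set $V$ can be approximated (in the $C^k$ topology, for any prescribed $k$) by a real analytic $p$-form. The plan is to (i) extend a given smooth form from $V$ to a compactly supported smooth form on $\bR^n$, (ii) approximate its coefficients in the standard basis by polynomials, and (iii) restrict back to $V$, invoking the fact that the inclusion $V \hookrightarrow \bR^n$ is algebraic, hence real analytic.

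Fix a smooth $p$-form $\omega$ on $V$. Because $V$ is a compact smooth submanifold of $\bR^n$, it admits a tubular neighborhood $T$ with smooth retraction $r : T \to V$. Choosing $\chi \in C^\infty_c(\bR^n)$ with $\chi \equiv 1$ near $V$ and $\mathrm{supp}(\chi) \subset T$, the form $\tilde\omega := \chi \cdot r^*\omega$ extends by zero to $\bR^n$ and satisfies $i^*\tilde\omega = \omega$, where $i : V \hookrightarrow \bR^n$ is the inclusion. Expand in standard coordinates:
\begin{displaymath}
  \tilde\omega \ = \ \sum_{|I|=p} f_I \, \dx^I, \qquad f_I \in C^\infty_c(\bR^n).
\end{displaymath}
For each $I$ and each $\epsilon>0$, I would produce a polynomial $q_I \in \bR[x^1,\ldots,x^n]$ with $\|f_I - q_I\|_{C^k(V)} < \epsilon$ by first convolving $f_I$ with a Gaussian mollifier to obtain an entire function that is $C^k$-close to $f_I$ on $V$, and then truncating its Taylor series about the origin, whose partial sums converge in $C^k$ on any fixed compact set. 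Equivalently, one can appeal directly to the Akbulut-King approximation results \cite{AK1,AK2,AK3}, which supply Nash (hence real analytic) approximations in the Whitney $C^\infty$ topology. The polynomial form $\tilde\omega_\epsilon := \sum_I q_I\, \dx^I$ is then entire real analytic on $\bR^n$, and its pullback $\omega_\epsilon := i^*\tilde\omega_\epsilon$ is real analytic on $V$ because $i$ is polynomial; by construction $\omega_\epsilon \to \omega$ in $C^k(V)$ as $\epsilon \to 0$.

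The principal technical point is producing the approximation in a topology strictly stronger than $C^0$, since the Weierstrass theorem alone only gives uniform approximation of coefficients. The two-step argument (mollify, then Taylor-truncate) resolves this cleanly, because each step converges in $C^k$ on any compact set; alternatively, the Akbulut-King machinery delivers convergence in the Whitney $C^\infty$ topology, which is in fact stronger than the uniform approximation furnished by Bochner's original theorem. Either way, the nontrivial content lies entirely in the approximation of the scalar coefficients $f_I$—once that is established, the passage back to $V$ via $i^*$ is automatic and preserves real analyticity.
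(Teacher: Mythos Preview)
Your proof is correct, but it follows a different route from the paper's. The paper exploits the algebraic structure of the tangent bundle: by a result of Akbulut and King \cite{AK3}, the classifying Gauss map $\rho : V \to G(k,n)$ for $TV$ is entire rational, so $TV \cong \rho^*E(k,n)$ sits inside an ambient Euclidean space as an algebraic set; a section $s$ of $TV$ is then a map $(x,f(x))$ with $f : V \to E$, and one approximates $f$ by a polynomial $F$ (Weierstrass) and composes with the real analytic tubular projection of $E$ to land back in the bundle. The passage to forms is handled at the end by duality. Your approach instead bypasses the bundle entirely: you extend the form to $\bR^n$ via a tubular retraction of $V$, approximate the scalar coefficients in the standard basis $\dx^I$, and restrict back. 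This is more direct and handles $p$-forms of all degrees uniformly, at the cost of not exhibiting the algebraic structure of $TV$ that the paper's argument highlights. Note also that your discussion of $C^k$ approximation via mollification and Taylor truncation goes beyond what the paper actually proves---the paper invokes only Weierstrass and so, as written, obtains only $C^0$ approximation, which is all that Bochner's original theorem asserts.
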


\begin{proof}

Let $X$ be a smooth, closed manifold. By Nash-Tognoli, it can be
made a nonsingular real algebraic variety $V$. Now, we show that
every smooth differential form on $V$ can be approximated by a
real analytic differential form. In \cite{AK3}, it was shown that
for a nonsingular algebraic set $V$ with dimension $k$, the
classifying Gauss map $\rho:V\rightarrow G(k,n)$ of the tangent
bundle $TV\rightarrow V$ is an entire rational map. Now, let
$E(k,n)$ be the universal bundle over the Grassmannian variety of
$k$ planes in $\mathbb{R}^n$:

\begin{equation*}
\begin{split}
&E(k,n)=\{(A,v)\in \mathcal{M}_{\mathbb R}(n)\times \mathbb{R}^n
|\;A\in G(k,n), \; Av=v \} \\
&\;\;\;\downarrow \pi \\
&G(k,n)=\{A\in \mathcal{M}_{\mathbb R}(n)|\; \; A^t=A, \;\; A^2=A,
\;\; trace(A)=k \}
\end{split}
\end{equation*}

\noindent where $\mathcal{M}_{\mathbb R}(n)$ denotes $n\times n$
real matrices. The tangent bundle $TV$ can be identified with the
pullback bundle $\rho^*E =\{ (x,v)\in V\times E | \;\;
\rho(x)=\pi(v)\}$ where $\rho$ and $\pi$ are both algebraic.

\vspace{.1in}

Then we have the following diagram:

\begin{equation*}
\begin{split}
&TV\;\;\;\;\;\;\;\;\;\;\;\; E(k,n) \\
& \;\; \downarrow \;\;\;\;\;\;\;\;\;\;\;\;\;\;\downarrow \pi \\
&\;\;\;V\;\;\longrightarrow \;\; G(k,n)
\end{split}
\end{equation*}

\noindent where $V$, $TV$ are nonsingular algebraic sets. Sections
$s:V\rightarrow TV\cong\rho^*E$ are given by $s(x)=(x, f(x))$, for
some function $f:V\rightarrow E$. This means that for real
analytic approximation of the sections $s$, it is sufficient to
find real analytic approximations of $f$.

\vspace{.1in}

Now, $V\subset \mathbb{R}^\ell$ and $E(k,n)\subset \mathbb{R}^m$ are
both nonsingular algebraic sets for some $\ell,m$. By the Weierstrass
Approximation Theorem, a real valued smooth map from an open
subset of  $\mathbb{R}^\ell$ can always be approximated by
polynomials. Denote this polynomial approximating $f$ as $F$. Even
though $F$ may not map $V$ to $E$, we can always take the
projection $\pi$ from the image of $F$ to $E$. This map, which is
from a tubular neighborhood of an algebraic variety $E$ to $E$,
is real analytic. So the composition of $F$ and
$\pi:tubular(E)\subset\mathbb{R}^N\rightarrow E$ yields a real analytic approximation of $f$, and thus the section $s=(x, f(x))$.

\vspace{.1in}

Since the cotangent and tangent bundles are the dual bundles, the
same real analytic approximation holds for differential forms.
\end{proof}

\vspace{.1in}

An important property of real analytic functions is that the
inverse of a real analytic function is also real analytic.

\begin{thm}[Real analytic inverse function theorem]
Let $F$ be real analytic in a neighborhood of $a=(a_1,a_2,...,
a_n)$ and suppose that its derivative at $a$, $DF(a)$, is
nonsingular. Then $F^{-1}$ is defined and real analytic in a
neighborhood of $F(a)$.

\end{thm}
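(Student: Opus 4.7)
The plan is to reduce the real analytic inverse function theorem to its holomorphic counterpart. First I would apply the smooth ($C^1$) inverse function theorem to obtain a $C^\infty$ local inverse $G$ of $F$ defined on an open neighborhood of $F(a)$; the remaining content is to upgrade smoothness to real analyticity.

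To do this, extend $F$ to a holomorphic map. Writing $F = (F^1, \ldots, F^n)$, each component is given on a neighborhood of $a$ in $\bR^n$ by a convergent power series in $x - a$, and this same series converges on a polydisc $U$ in $\bC^n$ centered at $a$. This yields a holomorphic extension $\widetilde F : U \to \bC^n$ with $\widetilde F = F$ on $U \cap \bR^n$. The complex Jacobian $D\widetilde F(a)$ is represented by the same matrix as $DF(a)$ in the standard basis, hence is invertible. Applying the holomorphic inverse function theorem to $\widetilde F$ at $a$ produces a holomorphic local inverse $\widetilde G$ on a neighborhood $V \subset \bC^n$ of $F(a)$. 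Because $\widetilde F$ sends real points to real points, so does $\widetilde G$ (by uniqueness of the local inverse, or by complex conjugation symmetry $\overline{\widetilde G(\bar y)} = \widetilde G(y)$). Restricted to the real slice $V \cap \bR^n$, $\widetilde G$ coincides with $G$, so $G = F^{-1}$ is the restriction of a holomorphic function to $\bR^n$, i.e., real analytic.

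The main obstacle, if one wishes to avoid invoking the holomorphic inverse theorem as a black box, is establishing convergence of the formally inverted power series. I would handle this by Cauchy's method of majorants: after normalizing $a = 0$, $F(0) = 0$, and $DF(0) = I$, write $F(x) = x - H(x)$ with $H$ vanishing to order $\geq 2$, and solve the fixed-point relation $K(y) = H(y + K(y))$ recursively for the formal power series of $K$, where $G(y) = y + K(y)$. Each Taylor coefficient of $K$ is dominated in absolute value by the corresponding coefficient of a suitably chosen scalar majorant series $k(t)$ satisfying $k(t) = h(t + k(t))$, where $h$ is the standard geometric majorant of the $H^i$. This scalar equation is solved in closed form by the quadratic formula and has positive radius of convergence, which then forces convergence of the series for $G$ and completes the proof.
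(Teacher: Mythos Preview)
Your proposal is correct. The paper itself does not give a detailed proof; it simply remarks that the result ``follows from a special case of the Cauchy--Kowalewsky Theorem'' and cites Krantz--Parks. That route treats the equation $DG(y) = [DF(G(y))]^{-1}$ as a real analytic first-order system with initial data $G(F(a))=a$, to which Cauchy--Kowalewsky (or its ODE specialization) applies directly.

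Your approach is genuinely different: you complexify $F$ to a holomorphic map and invoke the holomorphic inverse function theorem, then restrict back to the real slice. This is a clean and standard reduction, and your conjugation-symmetry argument for why $\widetilde G$ preserves real points is the right one. The alternative majorant argument you sketch is essentially the engine behind Cauchy--Kowalewsky itself, so in that sense your second method is closer in spirit to the paper's citation, though you apply majorants directly to the fixed-point recursion rather than passing through a PDE formulation. Both of your routes are self-contained and arguably more transparent than a bare appeal to Cauchy--Kowalewsky; the paper's approach has the virtue of brevity if one is willing to take that theorem as given.
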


The proof of this theorem follows from a special case of the
Cauchy-Kowalewsky Theorem \cite{Kr1,Kr2}.

\vspace{.1in}

As an important corollary, we obtain the implicit function theorem
in the analytic setting.

\begin{thm}[Real analytic implicit function theorem]
\label{thm:implicit} Suppose $F:\mathbb{R}^{n+m}\rightarrow
\mathbb{R}^m$ is real analytic in a neighborhood of $(x_0,y_0)$,
for some $x_0\in \mathbb{R}^n$ and some $y_0\in \mathbb{R}^m$. If
$F(x_0,y_0)=0$ and the $m \times m$ matrix with entries
$\frac{\partial F_i}{\partial y_j}${\scriptsize{$(x_0,y_0)$}}
is nonsingular, then there exists a function $f:\mathbb{R}^n
\rightarrow \mathbb{R}^m$ which is real analytic in a neighborhood
of $x_0$ and is such that $F(x,f(x))=0$
holds in a neighborhood of $x_0$.

\end{thm}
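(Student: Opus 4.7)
The plan is to deduce the result from the Real Analytic Inverse Function Theorem stated immediately above, by the standard augmentation trick.

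First, I would introduce the auxiliary map $G : \mathbb{R}^{n+m} \to \mathbb{R}^{n+m}$ defined by $G(x,y) := (x, F(x,y))$. Since $F$ is real analytic in a neighborhood of $(x_0, y_0)$ and projection onto the $x$-factor is a polynomial, $G$ is real analytic near $(x_0, y_0)$ and satisfies $G(x_0, y_0) = (x_0, 0)$.

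Next, I would verify the hypothesis of the Real Analytic Inverse Function Theorem for $G$ at $(x_0, y_0)$. In block form its derivative is
\begin{displaymath}
DG(x_0,y_0) \ = \ \begin{pmatrix} I_n & 0 \\ \frac{\partial F}{\partial x}(x_0,y_0) & \frac{\partial F}{\partial y}(x_0,y_0) \end{pmatrix},
\end{displaymath}
whose determinant equals $\det \frac{\partial F}{\partial y}(x_0, y_0)$, which is nonzero by hypothesis. Applying the inverse function theorem yields a real analytic local inverse $G^{-1}$ defined on some neighborhood of $(x_0, 0)$. Because the first $n$ coordinates of $G$ are the identity, the first $n$ coordinates of $G^{-1}$ must also be the identity by uniqueness of the inverse, so $G^{-1}$ has the form $G^{-1}(u,v) = (u, h(u,v))$ for some $h$ real analytic near $(x_0, 0)$.

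Finally, I would set $f(x) := h(x, 0)$. Then $f$ is real analytic in a neighborhood of $x_0$, and from the identity $G(G^{-1}(x, 0)) = (x, 0)$ one reads off $(x, F(x, f(x))) = (x, 0)$, hence $F(x, f(x)) = 0$ near $x_0$, as required (and necessarily $f(x_0) = y_0$). There is no substantive obstacle beyond the Real Analytic Inverse Function Theorem already quoted; the entire content of the implicit function theorem is packaged in that result, and the augmentation $G(x,y) = (x, F(x,y))$ is the standard device for extracting the implicit-function statement from the inverse-function one. A direct majorant/power-series argument in the spirit of the Cauchy-Kowalewsky method alluded to in the paper would also work, but the inverse-function route is both shorter and already set up by the preceding theorem.
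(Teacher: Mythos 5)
Your proof is correct and follows exactly the route the paper intends: the paper presents Theorem \ref{thm:implicit} as a corollary of the real analytic inverse function theorem without spelling out the deduction, and your augmentation $G(x,y)=(x,F(x,y))$ is the standard argument that makes this precise.
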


Assume that $V \subset \bR^n$ is a algebraic set, and define $Z =
\mathit{Nonsing}(V)$.  Let $g_0$ denote the canonical Euclidean
metric on $\bR^n$.  The Implicit Function Theorem for real
analytic maps \cite{Kr1,Kr2} implies that the restriction of $g_0$
to $Z$ is a real analytic Riemannian metric.

\vspace{.1in}

Let $K$ be a smooth, compact manifold.  Then the double
$\mathit{doub}(K)$ is  smooth and closed.  From Theorem
\ref{thm:tognoli} and Theorem \ref{thm:implicit}, we deduce the
following lemma.

\begin{lem}\label{lem:analyticstr}

Let $\mathit{doub}(K)$ be the double of a smooth, compact manifold
$K$.  Then $\mathit{doub}(K)$ admits a compatible real analytic
structure and a real analytic metric $g$.
\end{lem}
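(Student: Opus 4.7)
\textit{Proof plan for Lemma \ref{lem:analyticstr}.} The strategy is to transport structure from a real algebraic model of $\mathit{doub}(K)$ provided by Nash--Tognoli. First I would note that since $K$ is smooth and compact, its double $\mathit{doub}(K)$ is a smooth, closed manifold (gluing two copies of $K$ along $\partial K$ by the identity and smoothing at the seam in the standard way). Consequently Theorem \ref{thm:tognoli} applies, producing a nonsingular real algebraic set $V \subset \bR^n$ and a diffeomorphism $\phi : \mathit{doub}(K) \to V$.

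Next I would equip $V$ itself with a real analytic structure. Because $V$ is nonsingular of dimension $k := \tdim V$, every point $x \in V$ has a neighborhood $U \subset \bR^n$ and polynomials $p_1,\ldots,p_{n-k}$ cutting out $V \cap U$ with linearly independent gradients. The real analytic implicit function theorem (Theorem \ref{thm:implicit}) then expresses $V \cap U$ as the graph of a real analytic map between coordinate subspaces of $\bR^n$, giving a real analytic chart on $V$. The transition maps between any two such charts are real analytic, again by Theorem \ref{thm:implicit}, so $V$ inherits a canonical real analytic atlas refining its smooth structure as a submanifold of $\bR^n$.

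To produce the metric I would use the canonical Euclidean metric $g_0$ on $\bR^n$. Its pullback under the real analytic inclusion $V \hookrightarrow \bR^n$ is a real analytic symmetric $(0,2)$-tensor field on $V$ whose positive-definiteness is inherited from $g_0$ (since $T_xV$ sits as a linear subspace of $T_x\bR^n$), giving a real analytic Riemannian metric $g_V$ on $V$. This is exactly the observation recorded just before the lemma.

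Finally I would transport everything back via $\phi$: declare the real analytic structure on $\mathit{doub}(K)$ to be the pullback atlas $\{\phi^{-1}(\text{chart of }V)\}$, and set $g := \phi^* g_V$. The pulled back atlas is compatible with the original smooth structure on $\mathit{doub}(K)$ because $\phi$ is a diffeomorphism, and $g$ is a real analytic Riemannian metric with respect to this atlas because $\phi$ is, in these charts, a real analytic diffeomorphism. The only step requiring any real care is the compatibility claim --- that the analytic atlas induced by $\phi$ agrees, as a smooth atlas, with the given smooth structure on $\mathit{doub}(K)$ --- but this is automatic since smooth compatibility of atlases is invariant under smooth diffeomorphism, and the analytic atlas on $V$ is by construction a refinement of its smooth structure as a submanifold of $\bR^n$. \hfill\qed
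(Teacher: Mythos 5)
Your proposal is correct and follows essentially the same route as the paper: the paper deduces the lemma directly from the Nash--Tognoli theorem (Theorem \ref{thm:tognoli}) together with the real analytic implicit function theorem (Theorem \ref{thm:implicit}), using exactly the observation that the Euclidean metric restricts to a real analytic metric on the nonsingular algebraic model, and then pulls everything back along the diffeomorphism. Your write-up simply makes explicit the chart construction and the compatibility check that the paper leaves implicit.
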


\section{The associative embedding: proof of Theorems \ref{thm:assoc} \& \ref{thm:openassoc}}
\label{sec:assocproof}
Bryant has shown that the group $G_2$ is admissible
\cite[Prop.1]{BrExHol}.  Now consider the differential system
$\cI$ of \S\ref{sec:ideal}.  Here $n=7$ and the indices $j,k$
range over $1 \, , \, \ldots \, , \, 7$.  On $\cF$, the ideal is
generated by the 4-form $\td \widehat \varphi_0$ and the 5-form
$\td(\widehat{\ast\varphi_0})$ where
\begin{eqnarray*}
  \widehat{\varphi_0} & = & \eta^{123}
  + \eta^1 \wedge \left( \eta^{45} + \eta^{67} \right)
  + \eta^2 \wedge \left( \eta^{46} - \eta^{57} \right)
  + \eta^3 \wedge \left( - \eta^{47} - \eta^{56} \right) \\
  \widehat{\ast\varphi_0} & = & \eta^{4567}
  + \eta^{23} \wedge \left( \eta^{45} + \eta^{67} \right)
  + \eta^{31} \wedge \left( \eta^{46} - \eta^{57} \right)
  + \eta^{12} \wedge \left(  - \eta^{47} - \eta^{56} \right) \, .
\end{eqnarray*}

\vspace{.1in}

Given a $\fgl(7)$-valued connection form $\theta^j_k$ on $\cF$, we
have $\td\eta^j = -\theta^j_k\wedge\eta^k$.  Let $p^j_{k\ell}(u)$
be the functions parameterizing $\tGr_7(T_u\cF,\pi)$ introduced in
\S\ref{sec:int_elem}: on any $E \in \tGr_7(T_u\cF,\pi)$ we have
$\theta^j_k = p^j_{k\ell} \eta^\ell$. Then the equations $\td
\widehat\varphi_0 = 0 = \td\widehat{\ast\varphi}_0$ defining
$V_7(\cI,\pi)$ in $\tGr_7(T\cF)$ are linear conditions on the
parameters $p^j_{k\ell}$.  The first equation is equivalent to 35
(independent) linear constraints on the $p^j_{k\ell}$, and the
second equation imposes an additional 14.  Hence
\begin{displaymath}
  \tcodim( V_7(\cI,\pi) , \tGr_7(\cF) ) = 49 \, .
\end{displaymath}
Moreover, by work of Fernandez and Gray \cite{FG} we know that
$\tdim \, h^0(\fg_2) = 49$.  Whence $\G2$ is strongly admissible
(\S\ref{sec:str_adm}).

\vspace{.1in}

Fix $E \in V_7(\cI,\pi)$, and let $F$ denote the canonical flag of
\S\ref{sec:flags}.  Next we compute the sequence of polar spaces.
Since $(\Lambda^*\bR^7)^{G_2}$ contains no 1- or 2-forms, we have
$\fh_0 = \fh_1 = \fh_2 = M_7 \bR \simeq \bR^{49}$.  Next,
$i_3^*(x.\varphi_0) = (x^1_1+x^2_2+x^3_3) \, \dx^1 \wedge \dx^2
\wedge \dx^3$, so that $\fh_3 = \{ x \in M_7 \bR \ | \ x^1_1 +
x^2_2 + x^3_3 = 0 \} $. Similarly, $i_4^*(x.\varphi_0) = 0 =
i_4^*(x.\ast\varphi_0)$ implies that $\fh_4 \subset \fh_3$ is
given by the four additional equations
\begin{displaymath}
  0 \ = \ x^1_4 - x^6_3 - x^7_2 \ = \
  x^2_4 + x^5_3 + x^7_1 \ = \
  x^3_4 - x^5_2 + x^6_1 \ = \
  x^5_1 + x^6_2 - x^7_3 \, .
\end{displaymath}

Continuing in this fashion we find that $\tcodim(\fh_5) = 15$, $\tcodim(\fh_6) = 13$, and $\fh_7 = \fg_2$ so that $\tcodim(\fh_7) = 35$.

Whence the polar space codimensions are $(c_0, c_1, \ldots , c_6)
= (0, 0, 0, 1, 5, 15, 28)$ and $\sum c_j = 49$.  In particular, $\G2
\subset \tSO(7)$ is regularly presented (\S\ref{sec:flags}).
Additionally, Cartan's Test implies that
$V_7(\cI,\pi)$ is a codimension 49 submanifold of $\tGr(\cF,\pi)$,
and each $E \in V_7(\cI,\pi)$ is the terminus of a (canonical)
regular flag $F$.

\vspace{.1in}

This completes the necessary preliminaries for $G_2$.  For Theorem
\ref{thm:assoc} we assume that $(K^3,g)$ is a closed, oriented
real analytic Riemannian 3-manifold. As an oriented 3-manifold $K$
is smoothly parallelizable by a result of Wu \cite{MilSta}. Using
Bochner's result, \cite{Bochner}, or Theorem \ref{thm:altboch}, we
can conclude that $K$ admits a real analytic parallelization.

\vspace{.1in}

In the case of Theorem \ref{thm:openassoc}, invoke Lemma
\ref{lem:analyticstr} to endow $\mathit{doub}(K)$ with a
compatible real analytic Riemannian structure
$(\mathit{doub}(K),g)$.  (If $K$ is closed, then $\mathit{doub}(K)
= K$.)  As above, $\mathit{doub}(K)$ admits a real analytic
parallelization.

\vspace{.1in}

The rest of the argument applies to both theorems.  Let $A =
\mathit{int}(K)$.  (In Theorem \ref{thm:assoc}, $A = K$.)  Assume $A$ is connected, else apply the theorem to each connected component individually.
Restrict the Riemannian metric $g$ and real analytic
parallelization to $A$.  The Gramm-Schmidt process yields an
orthonormal parallelization, and 1-forms $\w_1$, $\w_2$ and $\w_3$
such that
\begin{displaymath}
  g \ = \ \w_1{}^2 + \w_2{}^2 + \w_3{}^2 \, ,
\end{displaymath}
and $\tvol_g = \w_1 \wedge \w_2 \wedge \w_3$.

\vspace{.1in}

Let $M = A \times \bR^4$, and let $y=(y^4,y^5,y^6,y^7)$ be linear
coordinates on $\bR^4$.  Regard the $y^j$ as functions on $M$ and
identify $A$ with the 0-section $A \times \{0\}$.  The 1-forms $\{
\w, \dy \}$ form a coframing of $M$ and define a global section $s
: M \to \cF$.  The corresponding trivialization of $\pi : \cF \to
M$ is given by associating to each $ u \in \cF_z$ the unique $g =
g(u) \in \tGL(7)$ such that $u = g^{-1} \circ s(z)$.  With respect
to the trivialization, the canonical 1-forms $\eta = (\eta^j)$ are
given by
\begin{displaymath}
  \eta_{(z,g)}(v) \ = \ g^{-1}
  \left( \begin{array}{c}
      \w(\pi_*v) \\
      \dy(\pi_*v)
  \end{array}\right) \ , \quad  v \in T_{z,g} (M \times \tGL(7)) \, .
\end{displaymath}
It will be convenient notationally to identify $\cF$ with the
trivialization $M \times \tGL(7)$.

\vspace{.1in}

Define an involution $r:M\to M$ by $r(p,y) = (p,-y)$, $(p,y) \in A \times
\bR^4 = M$.  Lift $r$ to an involution, also denoted by $r$, of
$\cF$ by defining $r(u) = r^*(u)$.  That is, given $u : T_zM \to
\bR^7$ in $\cF_z$ we define $r(u) : T_{r(z)} M \to \bR^7$ in
$\cF_{r(z)}$ to be the map sending $v \in T_{r(z)}M$ to $u(r_*
v)$.  With respect to the trivialization $\cF \simeq A \times
\bR^4 \times \tGL(7)$, we have $r(p,y,g) = (p,-y,Rg)$, where
\begin{displaymath}
  R \ = \
  \left( \begin{array}{cc}
    I_3 & 0 \\
    0 & -I_4
  \end{array} \right) \, .
\end{displaymath}
Notice that $R \in \G2$, so that $r:\cF\to\cF$ preserves the
$\rho$-fibres, and $r$ descends to a well-defined involution of
$S$.  Also, $\pi\circ r = r \circ\pi$ implies that $r^*\eta =
\eta$, so that
\begin{displaymath}
  r^*\left( \widehat{\varphi_0} \right) \ = \ \widehat{\varphi_0}
  \quad \textrm{and} \quad
  r^*\left( \widehat{\ast\varphi_0} \right) \ = \ \widehat{\ast\varphi_0} \, .
\end{displaymath}
Whence $r^*\cI = \cI$, and $r$ carries integral manifolds of $\cI$
to integral manifolds of $\cI$.

\vspace{.1in}

We are now ready to apply the Cartan-K\"ahler theorem to prove
Theorem \ref{thm:assoc}.  Define a lift $f_3: A \to S$ by
$f_3(p,0) = \rho \, (p,0,I_7)$, and let $X_3$ denote the image.
Since $X_3$ is three-dimensional, and $\cI$ is generated by a 4-
and 5-form, $X_3$ is trivially an integral manifold.  Because $R
\in \G2$, $X_3$ lies in the fixed locus of $r$.  We will use the
Cartan-K\"ahler theorem to thicken (in four
steps) $X_3$ to a seven-dimensional $r$-invariant integral
manifold that projects diffeomorphically onto a neighborhood $N$
of $A \subset M$.  Moreover, the induced $\G2$-structure on $N$
will have the properties that (i) $A \subset N$ is associative,
and (ii) the metric induced on $N$ by the $G_2$-structure agrees
with $g$ when restricted to $A$.  The construction is repetitive and very similar to that of \cite{BrClbEm}, so after detailing Steps 1 and 2 below, we will sketch the remaining steps.

\vspace{.1in}

\smallskip

{\it Step 1: Thicken $X_3$ to a 4-dimensional integral manifold
$X_4$ of $\cI$}.  Let $z = f_3(p)$.  To compute the polar space
$H(T_zX_3)$, note that $T_z X_3 = \rho_* T_p$, where $T_p$ is the
3-plane tangent to the lift $\{ (p,0,I_7) \ | \ p \in A \}$ in
$\cF$. There exists an $E_7 \in V_7(\cI,\pi)$ containing $T_p$.
Given this, it is clear that $T_p = E_3$ in the canonical regular
flag $F$ terminating in $E_7$.  Thus $T_z X_3$ is $\overline E_3$
in the canonical regular flag $\overline F$ (cf.
\S\ref{sec:flags}).

\vspace{.1in}

To see that such an $E_7$ exists, recall that $(p^j_{k\ell}) \in
\bR^{7^3}$ parameterizes the open set of $\pi$-transverse $E \in
\tGr_7(T_u\cF, \pi)$, and $V_7(\cI,\pi)$ is a linear subspace of
$\bR^{7^3}$ of codimension 49. The condition that $T^3_p$ lie in
some $E_7$ holds if we are free to specify the values of
$p^j_{k1}, p^j_{k2}, p^j_{k3}$ in $V_7(\cI,\pi)$. It can be checked
that these variables are independent in $V_7(\cI,\pi) \subset
\bR^{7^3}$, so such a specification is always possible.

\vspace{.1in}

We may now compute $\tdim \, H(T_{f_3(p)}X_3) = \tdim\,
H(\overline E_3) = 41$. Hence $X_3$ is a regular integral manifold
of extension rank 37.  Note that
\begin{displaymath}
  \tdim \, S \ = \ 42 \, ;
\end{displaymath}
so to apply the Cartan-K\"ahler Theorem we need to construct a
5-dimensional manifold $Z_3$ that contains $X_3$ with tangent
space at $z \in X_3$ transverse to $H(T_zX_3)$.

\vspace{.1in}

Let $W_1 \subset M_7\bR$ be the 1-dimensional subspace of matrices
of the form
\begin{displaymath}
  \left( \begin{array}{cc}
     x_1 I_3 & 0 \\
     0 & 0
  \end{array} \right) \, , \quad x_1 \in \bR \,  .
\end{displaymath}
Notice that $W_1 \cap \fh_3 = \{0\}$ and $R W_1 = W_1$.  Since
$\fg_2 \subset \fh_3$, the affine space $I_7+W_1$ intersects $\G2$
transversely at $I_7 \in \tGL(7)$.  Hence, there is a neighborhood
$U_1$ of $0$ in $W_1$ such that the map $U_1 \to \tGL(7)/G_2$
sending $x \mapsto (I+x)G_2$ is an embedding.

\vspace{.1in}

Define a 5-dimensional manifold $Z_3 \subset S$ by
\begin{displaymath}
  Z_3 \ := \ \left\{ \rho \left( p , (y^4, 0, 0, 0) , I_7 + x \right) \ |
  \ p \in A \, , \ y^4 \in \bR \, , \ x \in U_1 \right\} \, .
\end{displaymath}
As constructed $Z_3$ contains $X_3$, is $r$-invariant and
$H(T_zX_3) \cap T_zZ_3$, $z\in X_3$, is of dimension 4.  The
Cartan-K\"ahler theorem concludes that there exists a real
analytic, 4-dimensional integral manifold $Y_4 \subset Z_3$
containing $X_3$. Since $r^*\cI = \cI$, $r(Y_4)$ is also an
integral manifold.  And since $X_3$ and $Z_3$ are $r$-invariant,
we have $X_3 \subset r(Y_4) \subset Z_3$.  By the uniqueness part
of the Cartan-K\"ahler theorem the $r$-invariant $X_4 := Y_4 \cap
r(Y_4)$ is also a 4-dimensional integral manifold of $\cI$.

\vspace{.1in}

Given $z \in X_3$ note that the 4-plane $T_zX_4 = H(T_zX_3) \cap
T_zZ_3$ is (i) $\overline\pi$-transverse, and (ii) the $\overline
E_4$ of a regular flag $\overline F$.  Transversality implies that
a neighborhood of $X_3$ in $X_4$ projects diffeomorphically onto a
neighborhood $N_4$ of $A$ in $A \times \bR \subset M$.  Shrinking
$X_4$ if necessary, we may assume that $X_4$ is image of a section
$N_4 \to S$.  Item (ii) implies that $T_zX_4$ is regular.  Since
regularity is an open condition, again shrinking $X_4$ if
necessary, we may assume that $X_4$ is regular. Finally, we may
suppose (shrinking again if necessary) that $X_4$ is connected.
Whence the extension rank of $X_4$ is 32.

\vspace{.1in}

\smallskip

{\it Step 2: Thicken $X_4$ to a 5-dimensional integral manifold
$X_5$.} To apply the Cartan-K\"ahler theorem we must construct a
10-dimensional manifold $Z_4$ containing $X_4$ so that $T_zZ_4$
and $H(T_zX_4)$ are transverse along $X_4$. Define $W_5 \subset
M_7\bR$ to be the 5-dimensional subspace

\vspace{.1in}

\begin{displaymath}
  \left( \begin{array}{ccccccc}
    x_1 & 0 & 0 & x_2 & 0 & 0 & 0 \\
    0 & x_1 & 0 & x_2 & 0 & 0 & 0 \\
    0 & 0 & x_1 & x_2 & 0 & 0 & 0 \\
    0 & 0 & 0 & 0 & 0 & 0 & 0 \\
    x_5 & x_3 & x_3 & 0 & 0 & 0 & 0 \\
    x_4 & x_5 & x_3 & 0 & 0 & 0 & 0 \\
    x_4 & x_4 & x_5 & 0 & 0 & 0 & 0
  \end{array} \right) \, , \quad (x_1,x_2,x_3,x_4,x_5) \in \bR^5 \, .
\end{displaymath}

\vspace{.1in}

Notice that $W_1 \subset W_5$, $W_5 \cap \fh_4 = \{0\}$, and $RW_5
= W_5$. Since $\fg_2 \subset \fh_4$, the affine space $I_7 + W_5$
intersects $G_2$ transversely at $I_7 \in \tGL(7)$.  Hence there
is a neighborhood $U_5$ of $0 \in W_5$ such that the map $U_5 \to
\tGL(7)/\G2$ sending $x \to (I+x)G_2$ is an embedding.

\vspace{.1in}

Define a 10-dimensional manifold $Z_4 \subset S$ by
\begin{displaymath}
  Z_4 \ := \ \left\{ \rho \left( p , (y^4 , y^5 , 0 , 0 ) , I_7 + x \right)
  \ | \ p \in A \, , \ (y^4,y^5) \in \bR^2 \, , \ x \in U_5 \right\} \, .
\end{displaymath}

\vspace{.1in}

By construction $Z_4$ contains $X_4$, is $r$-invariant, and the
intersection $H(T_z X_4) \cap T_z Z_4$, $z\in X_4$, is of
dimension five.  Thus, the Cartan-K\"ahler theorem yields a
5-dimensional, real analytic integral manifold $Y^5$ such that
$X_4 \subset Y_5 \subset Z_4$.  Since $\cI$ is preserved under
$r$, $r(Y_5)$ is also an integral manifold.  The $r$-invariance of
$X_4$ and $Z_4$ implies $X_4 \subset r(Y_5) \subset Z_4$. The
uniqueness portion of the Cartan-K\"ahler theorem assures us that
$X_5 := Y_5 \cap r(Y_5)$ is also a 5-dimensional, real analytic
integral manifold of $\cI$.

\vspace{.1in}

Given $z \in X_3$, $T_z X_5 = H(T_zX_4) \cap T_zZ_4$ is (i)
$\overline\pi$-transverse, and (ii) the $\overline E_5$ of a
canonical regular flag.  Transversality implies that a
neighborhood of $X_3$ in $X_5$ projects diffeomorphically onto a
neighborhood $N_5$ of $A$ in $A \times \bR^2 \subset M$.  So,
shrinking $X_5$ if necessary, we may assume that it is the
(connected) image of a section $N_5 \to S$. Moreover, since
regularity is an open condition, a neighborhood of $X_3$ in $X_5$
will be regular.  Hence, again shrinking $X_5$ if necessary, we
may take $X_5$ to be a regular integral manifold.  The extension
rank of $X_5$ is 21.

\vspace{.1in}

\smallskip

{\it Steps 3 \& 4.}  As in Steps 1 \& 2 we may thicken $X_5$ to a 6-dimensional integral manifold $X_6$ of extension rank 7.  Then $X_6$ is thickened to a 7-dimensional integral manifold $X_7$ that is an $r$-invariant connected image of a section $\sigma : N \to S$ over an open
neighborhood $N$ of $A$ in $M$.

\vspace{.1in}

\smallskip

{\it The finish.}  As a section $N \to S$, $\sigma$ represents a
$G_2$-structure on the 7-dimensional $N$.  The corresponding
3-form on $N$ is $\varphi := \sigma^*(\widehat{\varphi_0})$.  By
construction $\sigma(N) \subset S$ is an integral manifold of
$\cI$.  Equivalently, the $G_2$-structure is torsion-free, and
$(N,\varphi)$ is a $G_2$-manifold.

\vspace{.1in}

The relation $r \circ \sigma = \sigma \circ r$ implies that
$r:M\to M$ restricts to an involution on $N$, and that $r^*
\varphi = \varphi$.  Whence $r$ is a $G_2$-involution
(\S\ref{sec:G2invol}), Lemma \ref{involution1}. It follows
immediately that $A$, as the fixed point locus of $r$ in $N$, is
associative.

\vspace{.1in}

Let $h$ denote the metric induced on $N$ by the $G_2$-structure.
At $z \in A$, $\{ \w_1 , \, \w_2 , \, \w_3 , \, \dy^4 , \, \dy^5 ,
\, \dy^6 , \, \dy^7 \}$ is a $G_2$ coframing of $T^*_zN$.  In
particular,
\begin{eqnarray*}
  \varphi_z & = &
  \w_{123} + \w_1 \wedge \left( \dy^{45} + \dy^{67} \right)
          + \w_2 \wedge \left( \dy^{46} - \dy^{57} \right) \\ & &
          + \w_3 \wedge \left( - \dy^{47} - \dy^{56} \right) \, , \\
  h_z & = & \w_1{}^2 + \w_2{}^2 + \w_3{}^2 + (\dy^4)^2 + (\dy^5)^2
          + (\dy^6)^2  + (\dy^7)^2 \,  .
\end{eqnarray*}
Whence $h_{|A} = g$, and the inclusion $i : A \hookrightarrow N$
is an isometry.  (Also, $i^*\varphi = \w_{123} = \tvol_A$, proving
again that $A$ is associative.)  This completes the proof of Theorem \ref{thm:assoc}.

\medskip

\noindent{\it Remark.}  We remarked in \S\ref{sec:intro} after the statement of Theorem \ref{thm:assoc} that, so long as $A$ is not flat, $\tHol(N) = G_2$.  In particular, $N \not= CY \times \bR$.  This may be seen as follows.  Suppose that $\tHol(N) \not= G_2$.  Then $\tHol(N) \subseteq \tSU(3)$ and $N$ is an $\tSU(3)$-manifold.

Set $\bi = \sqrt{-1}$ and take the $\tSU(3)$ action on $\bR^7 = \bC^3 \op \bR$ that fixes the forms $\td x^7$,  
\begin{eqnarray*}
  \w_0 & = & \td x^{16} - \td x^{25} - \td x^{34} \, , \\
  \Upsilon_0 & = & ( \td x^1 + \bi \, \td x^6 ) \wedge 
                   ( \td x^2 - \bi \, \td x^5 ) \wedge
                   ( \td x^3 - \bi \, \td x^4 ) \, .
\end{eqnarray*}
Then $\tSU(3)$ acts trivially on the second factor and by the standard representation on the first.

Let $R = \cF/\tSU(3)$ and consider 
\begin{center}
\setlength{\unitlength}{1cm}
\begin{picture}(2,3)
  \put(0.3,2.7){$\cF$}
  \put(0.4,2.5){\vector(0,-1){2}}
  \put(0.2,0){$N$}
  \put(0.8,2.8){\vector(2,-1){1}}
  \put(1.9,0.75){$S$}
  \put(2.0,1.8){\vector(0,-1){0.65}}
  \put(1.9,2.0){$R$}
  \put(1.7,0.7){\vector(-2,-1){1}}
  \put(0,1.5){$\pi$}
  \put(1.3,0.15){$\overline \pi$}
  \put(1.4,2.7){$\mu$}
  \put(2.1,1.5){$\nu$}
\end{picture}
\end{center}
Above, $\nu \circ \mu = \rho$.  Let $\tilde \pi = \overline \pi \circ \nu : R \to N$.

If $\tHol(N) \subseteq \tSU(3)$, then $N$ is an $\tSU(3)$-manifold.  In particular, $N$ admits a section $\tau : N \to R$ such that $\nu \circ \tau = \sigma$.  Rename $\cI = \cI_{G_2}$, and let $\cI_{\tSU(3)}$ be the ideal generated by $\td \eta^7$, $\td \w$ and $\td \Upsilon$, where  
\begin{eqnarray*}
  \w & = & \eta^{16} - \eta^{25} - \eta^{34} \\
  \Upsilon & = & ( \eta^1 + \bi \, \eta^6 ) \wedge 
                   ( \eta^2 - \bi \, \eta^5 ) \wedge
                   ( \eta^3 - \bi \, \eta^4 ) \, .
\end{eqnarray*}
The image $\tau(N)$ is necessarily an integral manifold of $\cI_{\tSU(3)}$.

Fix $z \in X_3$, and let $E \in V_7(\cI_{G_2}, \pi)$ denote the 7-plane constructed in the proof above with $\rho_*E = T_z X_7 = T_z \sigma(N) \in V_7(\cI_{G_2} , \overline \pi)$.  If $\tau : N \to R$ exists, then there exists $E' \in V_7( \cI_{\tSU(3)} , \pi ) \subset V_7(\cI_{G_2} , \pi)$ such that $\mu_* E' \in V_7(\cI_{\tSU(3)} , \tilde\pi)$ is tangent to $\tau(N)$ and $\rho_* E' = \nu_* ( \mu_* E' ) = \rho_* E$.  As noted in \S\ref{sec:int_elem}, this implies $E' \equiv E \tmod \fg_2$.  A lengthy computation confirms that this is possible if and only if $A$ is flat.

\section{The Cayley embedding: proof of Theorems \ref{thm:cayley} \& \ref{thm:opencayley}}
\label{sec:cayleyproof}
\noindent

{\it Remark.} As an application of the \CK Theorem the proof of
Theorems \ref{thm:cayley} and \ref{thm:opencayley} is very like the proof of Theorems \ref{thm:assoc} and \ref{thm:openassoc}.  However, unlike 3-manifolds, the 4-manifold $A=\mathit{int}(K)$
may not admit a global parallelism.  We assume that the bundle of
self-dual 2-forms on $\mathit{doub}(K)$ is trivial in order to obtain the structure necessary to apply the \CK Theorem.

\vspace{.1in}

The group $\tSpin(7)$ is admissible \cite{BrExHol};
$\Lambda^*(\bR^8)^{\tSpin(7)}$ is generated by the 4-form
$\Psi_0$.
The differential system $\cI$ of \S\ref{sec:ideal} is generated by the
exterior derivative of
\begin{eqnarray*}
  \widehat{\Psi_0} & = &
  \eta^{0123} + \eta^{4567} +
  \left( \eta^{01} + \eta^{23} \right) \wedge
  \left( \eta^{45} +\eta^{67} \right) \\ & &
  + \left( \eta^{02} + \eta^{31} \right) \wedge
  \left( \eta^{46} + \eta^{75} \right)
  + \left( \eta^{03} + \eta^{12} \right) \wedge
  \left( \eta^{74} + \eta^{65} \right) \, .
\end{eqnarray*}
The condition that $\td \widehat{\Psi_0}=0$ is 56 independent linear equations on the
functions $p^j_{k\ell}(u)$ parameterizing $\tGr_8(T_u \cF, \pi)$ (cf. \S\ref{sec:int_elem}).  Thus
\begin{displaymath}
  \tcodim ( V_8(\cI,\pi) , \tGr_8(T\cF) ) \ = \ 56 \, .
\end{displaymath}
Since $\tdim \, \fh^0(\mathfrak{spin}(7)) = 56$
\cite[Prop.4]{BrExHol}, it follows that $\tSpin(7)$ is strongly
admissible, c.f. \S\ref{sec:str_adm}.

\vspace{.1in}

Fix $E \in V_8(I,\pi)$, and let $F$ denote the canonical flag of
\S\ref{sec:flags}. The subspaces $\mathfrak{h}_k$ of
\S\ref{sec:flags} are: $\mathfrak{h}_0 = \mathfrak{h}_1 =
\mathfrak{h}_2 = \fh_3 = M_8\bR \simeq \bR^{64}$. The subspace
$\fh_4 \subset M_8\bR$ is defined by the single equation
$x^0_0+x^1_1+x^2_2+x^3_3 = 0$.  As in \S\ref{sec:assocproof} calculations of the remaining  $\fh_j$ lead us to the polar space codimensions: $(c_0, c_1, \ldots, c_8) = (0,0,0,0,1,5,15,35,43)$ and $\sum c_j = 56$, so that $\tSpin(7)$ is regularly presented, c.f \S\ref{sec:flags}.  Cartan's Test concludes that $V_8(\cI,\pi)$ is a codimension 56 (observed above) submanifold of $\tGr_8(T\cF, \pi)$, and each $E \in V_8(\cI,\pi)$ is the terminus of a canonical regular flag $F$.
\medskip



\vspace{.1in}

This completes the necessary preliminaries for $\tSpin(7)$. For
Theorem \ref{thm:cayley} we assume that $(K^4,g)$ is a closed
oriented, real analytic Riemannian 4-manifold, and that the bundle
$\Lambda^2_+(K)$ of self-dual 2-forms over $K$ is smoothly
trivial.  Bochner's result or Theorem \ref{thm:altboch} implies
that $\Lambda^2_+(K)$ is real-analytically trivial.  In
particular, there exist globally defined real-analytic self-dual
2-forms $\Omega_1, \, \Omega_2 , \, \Omega_3$ such that $\Omega_j
\wedge \Omega_k = 2 \delta_{jk} \, \tvol_g$.

\vspace{.1in}

In the case of Theorem \ref{thm:opencayley}, invoke Lemma
\ref{lem:analyticstr} to endow $\mathit{doub}(K)$ with a
compatible real analytic Riemannian structure
$(\mathit{doub}(K),g)$.  (If $K$ is closed, then $\mathit{doub}(K)
= K$.)  As above, $\mathit{doub}(K)$ admits globally defined
real-analytic self-dual 2-forms $\Omega_1, \, \Omega_2 , \,
\Omega_3$ such that $\Omega_j \wedge \Omega_k = 2 \delta_{jk} \,
\tvol_g$.

\vspace{.1in}

The rest of the argument applies to both theorems.  Let $A =
\mathit{int}(K)$.  (In Theorem \ref{thm:cayley}, $A = K$.)  Assume $A$ is connected, else apply the theorem to each connected component individually.
Restrict the Riemannian metric $g$ and 2-forms $\Omega_j$ to $A$.
While $A$ may not admit a global coframing, it is not difficult to
check that there exist local orthonormal coframings $\{ \w^a
\}_{a=1}^4$ such that
\begin{displaymath}
  \Omega_1 = \w^1 \wedge \w^2 + \w^3 \wedge \w^4 \, , \quad
  \Omega_2 = \w^1 \wedge \w^3 - \w^2 \wedge \w^4 \, , \quad
  \Omega_3 = \w^1 \wedge \w^4 + \w^2 \wedge \w^3 \, .
\end{displaymath}

This choice of coframing is unique up to the action of
$\textrm{SU}(2)$.  (Recall, $\mathrm{SU}(2)$ is the subgroup of
$\mathrm{SO}(4)$ preserving the two forms $\frac{i}{2} ( \zeta^1
\wedge \overline \zeta^1 + \zeta^2 \wedge \overline \zeta^2 )$ and
$\zeta^1 \wedge \zeta^2$, where $\zeta^1 = \w^1 + i \w^2$ and
$\zeta^2 = \w^3 + i \w^4$.)  We may identify $\tSU(2)$ with the
subgroup of $\tSpin(7)$ fixing $\dx^j$, $j=0 , \ldots , 3$, via
$$
  \left\{ \left( \left. \begin{array}{cc} \tId_4 & 0 \\ 0 & P \end{array} \right) \right| P \in \tSU(2) \right\} \subset \tSpin(7) \, .
$$

Let $M = \bR^4 \times A$ with linear coordinates $\{ y^j \}_{j=0}^3$ on $\bR^4$.
Then
$$
  \Psi \ = \ \dy^{0123} + \half \Omega_1 \wedge \Omega_1 +
  \left( \dy^{01} + \dy^{23} \right) \wedge \Omega_1
  + \left( \dy^{02} + \dy^{31} \right) \wedge \Omega_2
  - \left( \dy^{03} + \dy^{12} \right) \wedge \Omega_3 \,
$$
defines a $\tSpin(7)$-structure on $M$.  Let $\sigma : M \to S$ denote the corresponding section.

\vspace{.1in}

Define an involution on $M$ by $r(y,p) = (-y,p)$.  Note that
$r^*\Psi = \Psi$.  Define a covering involution $r : \cF \to \cF$
as follows.  Given $u : T_zM \to \bR^8$, let $r(u)$ be the coframe
$r^*(u) : T_{r(z)} M \to \bR^8$.  Then $r^*(\eta) = \eta$ and
$r^*\widehat{\Psi_0} = \widehat{\Psi_0}$. Let $\{ \w^a \}$ be a
coframing of an open set $U \subset L$.  Then $\{ \td y^j , \w^a
\}$ defines a trivialization $\cF_{|\bR^4\times U} :=
\pi\inv(\bR^4\times U) \simeq \bR^4 \times U \times \tGL_8\bR$.
Notice that $\cF_{|\bR^4\times U}$ is invariant under $r$ and,
with respect to the trivialization, is given by $r(y,p,g) =
(-y,p,Rg)$, where
\begin{displaymath}
  R \ = \ \left( \begin{array}{cc} -I_4 & 0 \\ 0 & I_4 \end{array} \right) \in \tSpin(7) \, .
\end{displaymath}
In particular, $r$ descends to a well-defined involution on $S$.

\vspace{.1in}

From this point on the proof of Theorem \ref{thm:cayley} is very
similar to the proof of Theorem \ref{thm:assoc}; so we merely
sketch the main steps.  Define $X_4 = \sigma(A)$.  Since $\cI$ is
generated by a 5-form, and $X_4$ is 4-dimensional, $X_4$ is
trivially an integral manifold of $\cI$.  Since $r \circ \sigma =
\sigma \circ r$, it follows that $X_4$ lies in the fixed point
locus of $r : S \to S$.

\vspace{.1in}

It remains to select the subspaces $W_{d_1} \subset W_{d_2}
\subset W_{d_3} \subset W_{d_4} \subset M_8\bR$,
$(d_4,d_5,d_6,d_7) = (1,5,15,35)$, so that (i) $\tdim W_{d_j} =
d_j$, (ii) $W_{d_s} \cap \fh_s = \{0\}$, and (iii) $R W_{d}
\subset W_{d}$.  Because the coframing $\w, \dy$ of $M$ is defined
only up to the $\tSU(2)$ action it is also necessary that we pick
the subspaces so that $\tSU(2) \,  W_d \subset W_d$.  We leave this exercise to the reader.


\end{document}